\newcommand{\cA}{{\mathcal A}}
\newcommand{\cB}{{\mathcal B}}
\newcommand{\cD}{{\mathcal D}}
\newcommand{\cG}{{\mathcal G}}
\newcommand{\cH}{{\mathcal H}}
\newcommand{\cI}{{\mathcal I}}
\newcommand{\cK}{{\mathcal K}}
\newcommand{\cL}{{\mathcal L}}
\newcommand{\cQ}{{\mathcal Q}}
\newcommand{\cU}{{\mathcal U}}
\newcommand{\myspace}{\qquad\qquad\qquad}
\newtheorem{theorem}{Theorem}[section]
\newtheorem{lemma}[theorem]{Lemma}
\newtheorem{proposition}[theorem]{Proposition}
\newtheorem{remark}[theorem]{Remark}
\newtheorem{remarks}[theorem]{Remarks}
\newtheorem{assumptions}[theorem]{Assumptions}
\newtheorem{problem}[theorem]{Problem}
\numberwithin{equation}{section}
\date{}
\begin{document}

\title[]{Optimal synthesis control for evolution equations subject to nonlocal inputs}

\author{Paolo Acquistapace}
\address{Paolo Acquistapace, Universit\`a di Pisa ({\em Ret.}), Dipartimento di Matematica,
Largo Bruno~Pontecorvo 5, 56127 Pisa, ITALY 
}
\email{paolo.acquistapace(at)unipi.it}

\author{Francesca Bucci}
\address{Francesca Bucci, Universit\`a degli Studi di Firenze,
Dipartimento di Matematica e Informatica,
Via S.~Marta 3, 50139 Firenze, ITALY
}
\email{francesca.bucci(at)unifi.it}

\makeatletter
\@namedef{subjclassname@2020}{\textup{2020} Mathematics Subject Classification}
\makeatother

\subjclass[2020]
{49N10, 
35R09; 
93C23, 
49N35} 

\keywords{evolution equations with memory, linear quadratic problem, optimal synthesis, closed-loop optimal control, Riccati equation}

\begin{abstract}
We consider the linear quadratic (LQ) optimal control problem for a class of evolution equations in infinite dimensions, in the presence of distributed and nonlocal inputs.
Following the perspective taken in our previous research work on the LQ problem for integro-differential equations, where the memory term -- here involving the control actions -- is seen as a component of the state, we offer a full (closed-loop, Riccati-like) solution to the optimization problem.
\end{abstract}

\maketitle


\section{Introduction} \label{s:intro}
This article is devoted to the study of the quadratic optimal control problem on a finite time horizon for a class of linear systems in Hilbert spaces, in the presence of finite memory of the control actions, following the authors' recent work in \cite{ac-bu-memory1_2024} on the similar problem for evolution equations with memory (usuallly affecting the state variable, a feature brought about e.g. by the modeling of some diffusion processes or other physical phenomena that exhibit hereditary effects).

The linear system under consideration, that is \eqref{e:system-start}, covers evolutionary partial differential equations (PDE) with distributed control actions (both local and nonlocal ones).
We find this problem interesting enough in itself; in addition, the obtained results might also serve as an intermediate step towards future developments.
The question that is addressed is whether an optimal synthesis is viable by way of solving suitable quadratic (operator) equations associated with the optimization problem.   
To wit, we seek a closed-loop representation of the optimal control involving operators
which can be uniquely determined, just like it happens in the memoryless case via Riccati equations.

A fairly general model equation with memory in Hilbert spaces has been studied in \cite{pritchard-you_1996} with a similar goal, not fully attained though.
Indeed, a 
representation formula for the optimal control is established therein, with the feedback operator
depending on an another operator which is shown to solve a Fredholm integral equation\footnote{In the authors' words, the said equation \textquote{\dots plays a role similar to that of the operator Riccati equation}.}.
Thus, a reference work for these authors is \cite{pandolfi-memory_2018}, where the linear quadratic  (LQ) problem for a simple integro-differential model in $\mathbb{R}^d$ is studied and the optimal synthesis via Riccati-type equations is actually obtained (for the first time, to the best of our knowledge).
An enhanced result has appeared recently in \cite{pandolfi-memory_arXiv2024}.
Recent work of these authors on the same integro-differential system yet in infinite dimensions \cite{ac-bu-memory1_2024} thus constitutes an improvement upon \cite{pandolfi-memory_2018} and paved the way to the approach we present here.      
 
We would like to mention that most studies on optimal control problems for PDE with memory pertain to more general frameworks than the LQ one. They involve, e.g., semilinear PDE and/or non-quadratic functionals, and hence are aimed at establishing the existence of {\em open-loop} solutions as well as at characterizing them via first (and possibly, second) order optimality conditions; see e.g. \cite{cannarsa-etal_2013} and \cite{casas-yong-memory_2023}.
We also emphasize that in the presence of an infinite memory, the celebrated ``history approach'' of C.~Dafermos has proved effective in attaining a reformulation of the nonlocal problem;
the equivalent coupled system satisfied by an augmented variable may constitute a starting point for
the exploration of distinct control-theoretic properties -- such as uniform stability 
--, as well as inverse problems.
It must be stressed that all works \cite{pritchard-you_1996}, \cite{pandolfi-memory_2018} and \cite{ac-bu-memory1_2024} study {\em finite} time horizon problems for control system with {\em finite} memory, in this connection.
 
While all the aforementioned works pertain to deterministic evolution equations, it is in the realm of stochastic equations, possibly in the presence of fractional derivatives, that the literature appears richer.
See e.g. \cite{wang-t_2018}, \cite{bonaccorsi-confortola_2020}, \cite{abi-etal_2021a,abi-etal_2021b}
\cite{han-etal_2023}, \cite{wang-h-etal_2023}, \cite{hamaguchi-wang_2024}.
It goes withut saying that our list of references is by no means exhaustive; it is intended to  
provide a background which is most pertinent to our current work.
A suggested monograph on the subject is \cite{pandolfi-book}, along with the references therein. 

In terms of methods, on the one side we adopt the line of 
argument referred to as dynamic programming approach, according to which the minimization problem is embedded into a family of similar optimization problems depending on certain parameters, the initial time $s\in [0,T)$ and the initial state (what is actually meant for {\em state} here will be made
clear below). 
The goal is then to prove that the infimum of the functional -- which is a function of the said parameters -- solves an appropriate differential equation, that is Bellman's equation in the
general case and reduces to an operator Riccati equation in the LQ case. 
Solving this equation is key to the {\em synthesis} of the optimal control, based upon its representation in {\em closed-loop} form. 

On the other hand, the proof of the well-posedness of a certain system of quadratic (operator) equations associated with the present optimization problem will not constitute the starting point of our analysis. 
Rather, we perform and adapt an established path pursued 
in the study of the LQ problem for memoryless control system describing PDE problems, which develops through a series of principal steps (see \cite{las-trig-redbooks}).
These deal with 
\begin{itemize}

\item
the existence of a unique minimizer (the {\em open-loop} optimal control) via a 
convex optimization argument;

\item
the optimality condition, which brings about 

\item
the definition of an operator $P(t)$ (in terms of the optimal evolution), 
which is a building block of the optimal cost and also enters a certain feedback formula; 
this is a candidate to be a solution to the Riccati equation; 

\item
the issue of existence for the Riccati equation -- by way of proving that $P(t)$ {\em does} solve it
-- and

\item
the issue of uniqueness for the Riccati equation, which is key to the unique determination of 
the (optimal cost) operator $P(t)$, thereby allowing the synthesis of the optimal {\em feedback} control, to wit, in {\em closed-loop} form.
\end{itemize}

Performing this broad plan entails new challenges, certainly in comparison to the memoryless case,
and also to the analysis carried out in \cite{ac-bu-memory1_2024}.
We remark that: 
(i) we take the perspective previously utilized in our earlier work \cite{ac-bu-memory1_2024}, following \cite{pandolfi-memory_2018}, wherein the history is seen as a (second) component of the state;
(ii) the solution formula corresponding to the control system \eqref{e:system-start} naturally accounts for the more involved computations at any step of the investigation.
(iii) Once again, differently from the memoryless case, it is not apparent that the operators $P_i$, $i\in \{0,1,2\}$, that are building-blocks of the optimal cost actually occur in a first feedback formula that follows from the optimality condition;
(iv) also, the very same transition properties possessed by the optimal pair demand a careful analysis and tailored calculations; see the Appendix.
(v) Proving that the triplet $(P_0,P_1,P_2)$ is the {\em unique} solution to a certain system of coupled (operator) equations is much harder owing to the dependence of the space $Y_t=H\times L^2(0,t;U)$ on $t$; however, the analyis here benefits from the insight gained through our previous work in \cite[Section 6]{ac-bu-memory1_2024}.


\subsection{The problem setup} 
To formalize the problem and to state the main results of this paper we need to provide
a description, albeit minimal, of the mathematical framework.
%
Let $H$ and $U$ be two separable complex Hilbert spaces, $T>0$ be given, and set $\cU:=L^2(0,T;U)$.
Consider the control system
\begin{equation} \label{e:system-start}
w'=Aw+Bu+\int_0^t k(t-\sigma)Bu(\sigma)\,d\sigma\,, \qquad t\in (0,T]\,,
\end{equation}
supplemented with an initial condition $w(0)=w_0\in H$; the function $u(\cdot)$ -- having a role of a control action -- varies in $\cU$.
The operators $A$ and $B$ and the kernel $k$ are assumed to satisfy the following properties.

 
\begin{assumptions}[\bf Basic Assumptions] \label{a:ipo_0} 
Let $H$, $U$ be separable complex Hilbert spaces. 
\begin{itemize}
\item
The 
linear operator $A\colon \cD(A)\subset H \to H$ is the infinitesimal generator of a strongly continuous semigroup $\{e^{tA}\}_{t\ge 0}$ on $H$;

\item 
$B\in \cL(U,H)$;

\item
$k\in L^2(0,T;\mathbb{R})$.
\end{itemize}

\end{assumptions} 


\begin{remarks}
\begin{rm}
(i) As it is well-known, the boundedness of the control operator $B$ covers the case of partial
differential equations (PDE) systems subject to control actions which are {\em distributed} inside
the (bounded) domain. 
The analysis of the interesting case where {\em boundary} control actions are present -- resulting in
{\em unbounded} control operators $B$ (in the functional-analytic representation) -- is left to
future investigation.
(ii) The assumption that the memory kernel $k$ is real valued can be relaxed to 
$k\in L^2(0,T;\cL(H))$, with the computations carried out still valid, provided the operator 
$k(\cdot)$ commutes with the semigroup $e^{\cdot A}$.
\end{rm}
\end{remarks}

Under the Assumptions~\ref{a:ipo_0}, the Cauchy problem associated with the evolution equation \eqref{e:system-start} admits a unique mild solution
\begin{equation} \label{e:mild-sln}
w(t)=e^{At} w_0 +\int_0^t e^{A(t-q)}Bu(q)\,dq +\int_0^t e^{A(t-q)} \int_0^q k(q-p)Bu(p)\,dp\,dq
\end{equation}
that corresponds to a given initial datum $w_0\in H$ at the initial time $t=0$, as well as to
a control action $u(\cdot)\in L^2(0,T;U)$. 

To the model equation \eqref{e:system-start} we associate the following quadratic functional over the preassigned time interval $[0,T]$:
\begin{equation} \label{e:cost}
J(u)=J_T(u,w_0)=\int_0^T \left(\|Cw(t)\|_H^2 + \|u(t)\|_U^2\right)dt\,, 
\end{equation}
where the weighting operator $C$ simply satisfies 
\begin{equation} \label{e:ipo_1}
C\in \cL(H)\,.
\end{equation}
The simplified notation $J(u)$ should be self-explanatory and will be used throughout. 

\smallskip
The optimal control problem is formulated in the usual classical way.


\begin{problem}[\bf The optimal control problem] \label{p:problem-0}
Given $w_0\in H$, seek a control function $\hat{u}(\cdot)=\hat{u}(\cdot,0,w_0)$ which minimizes the functional \eqref{e:cost} overall $u\in L^2(0,T;U)$, where $w(\cdot)$ is the mild solution to \eqref{e:system-start} (given by \eqref{e:mild-sln}) corresponding to the control function $u(\cdot)$ and with initial datum $w_0$ (at time $0$). 
\end{problem}


\medskip
Consistently with the dynamic programming approach, the initial time $s$ is then allowed to vary in the interval $[0,T)$. 
Then, the solution formula \eqref{e:mild-sln} is rewritten accordingly: the input $u(\cdot)$ now
belongs to $L^2(s,T;U)$, while a novel function $\eta(\cdot)\in L^2(0,s;U)$ enters the representation,
that is
\begin{equation} \label{e:mild-sln_s}
w(t)=e^{A(t-s)} w_0 +\big[\big(L_s+H_s)u\big](t)+ \cK_s \eta(t)\,,
\end{equation}
with the operators $L_s$, $H_s$ and $\cK_s$ 
defined by
\begin{subequations}
\begin{align}
& L_s u(t)\equiv(L_s u)(t)=\int_s^t e^{A(t-q)}Bu(q)\,dq
\\[1mm]
& \label{e:operator_Hs} H_s u(t)\equiv(H_s u)(t)=\int_s^t e^{A(t-q)} \int_s^q k(q-p)Bu(p)\,dp\,dq
\\[1mm]
& \cK_s \eta(t)\equiv(\cK_s \eta)(t)=\int_s^t e^{A(t-q)} \int_0^s k(q-p)B\eta(p)\,dp\,dq\,.
\nonumber 
\end{align}
\end{subequations}
An easy computation allows a neat rewriting of the term $\cK_s \eta(t)$, that is
\begin{equation} \label{e:operator-cK}
\cK_s \eta(t) =\int_0^s \Big[\int_s^t e^{A(t-q)} k(q-p)B\,dq\Big]\eta(p)\,dp
=:\int_0^s \lambda(t,p,s)\eta(p)\,dp\,,
\end{equation}
where we introduced the operator $\lambda(t,p,s)$ defined by
\begin{equation*}
\lambda(t,p,s):=\int_s^t e^{A(t-q)} k(q-p)B\,dq\,.
\end{equation*}

One key idea of our argument is the consideration of the memory (here of the control function) 
up to time $s$ as a component of the {\em state}, when $s>0$; it then makes sense 
to group the terms which depend on $w_0$ and $\eta(\cdot)$ in \eqref{e:mild-sln_s}.
Thus, with an initial state $X_0$ which equals $w_0$ in the case $s=0$ while it subsumes the initial datum $w_0$ at time $s\in (0,T)$ and a given and yet arbitrary function $\eta(\cdot)$ defined in the interval $(0,s)$, we can re-express the solution formula \eqref{e:mild-sln_s} as 
\begin{equation} \label{e:formula_1}
w(t)=E(t,s)X_0+ \big[\big(L_s+H_s)u\big](t)\,, 
\end{equation}
where we set
\begin{equation}  \label{e:operator-E}
E(t,s)X_0:=e^{A(t-s)} w_0+\cK_s \eta(t)\,, 
\qquad X_0=\begin{pmatrix}w_0
\\[1mm] 
\eta(\cdot)\end{pmatrix}\,.
\end{equation}
The product space
\begin{equation}\label{e:state_space}
Y_s:= \begin{cases}
H & s=0
\\
H\times L^2(0,s;U) & 0<s<T
\end{cases}
\end{equation}
is the actual {\em state space} for the dynamics \eqref{e:mild-sln_s}.

Thus, along with the solution $w(\cdot)$ in \eqref{e:formula_1} -- corresponding to an initial state $X_0\in Y_s$ at time $s$ and a control function $u\in L^2(s,T;U)$ -- we introduce the family of
functionals
\begin{equation} \label{e:cost_s}
J_s(u)=J_{T,s}(u,X_0)=\int_s^T \big(\|Cw(t)\|_H^2 + \|u(t)\|_U^2\big)\,dt\,;
\end{equation}
the relative optimal control problem is formulated in a natural way.


\begin{problem}[\bf Parametric optimal control problem] \label{p:problem_s}
Given $X_0\in Y_s$, seek a control function $\hat{u}=\hat{u}(\cdot,s,X_0)$ which minimizes the functional \eqref{e:cost_s} overall $u\in L^2(s,T;U)$, where $w(\cdot)$ -- given by \eqref{e:mild-sln_s} -- is the solution to \eqref{e:system-start} corresponding to a control function $u(\cdot)$ and with initial datum $X_0$. 
\end{problem}


\subsection{Main results}
For expository purposes, the various principal outcomes of this work are gathered in this introduction
within a unique result.
 
 
\begin{theorem} \label{t:main}
With reference to the optimal control problem \eqref{e:mild-sln_s}-\eqref{e:cost_s}, under
the Assumptions~\ref{a:ipo_0} and the hypothesis \eqref{e:ipo_1}, the following statements are valid for any $s\in [0,T)$.

\begin{enumerate}

\item[\bf S1.] 
For each $X_0\in Y_s$ there exists a unique optimal pair 
$(\hat{u}(\cdot,s,X_0),\hat{w}(\cdot,s,X_0))$
which satisfies 
\begin{equation*}
\hat{u}(\cdot,s,X_0)\in C([s,T],U)\,, \quad \hat{w}(\cdot,s,X_0)\in C([s,T],H)\,.
\end{equation*}

\smallskip

\item[\bf S2.] 
Given $t\in [s,T]$, the linear bounded operator $\Phi(t,s)\colon Y_s \longrightarrow Y_t$ defined by 
\begin{equation} \label{e:evolution-map} 
\Phi(t,s)X_0 :=
\begin{pmatrix}
\hat{w}(t,s,X_0)
\\
\hat{v}(\cdot)
\end{pmatrix}
\; \text{\small where} \; \hat{v}(\cdot)=
\begin{cases} 
\eta(\cdot) & \text{in $[0,s)$}
\\
\hat{u}(\cdot,s,X_0) & \text{in $[s,t]$}
\end{cases}
\end{equation}
is an evolution operator, namely, it satisfies the transition property
\begin{equation*}
\Phi(t,t)=I\,, \qquad 
\Phi(t,s)=\Phi(t,\tau)\Phi(\tau,s) \quad \textrm{for \ $s\le \tau\le t\le T$.}
\end{equation*}

\smallskip

\item[\bf S3.]
There exist three linear bounded operators, denoted by $P_0(s)$, $P_1(s,p)$, $P_2(s,p,q)$ -- 
defined in terms of the optimal evolution and of the data of the problem
(see the expressions \eqref{e:riccati-ops} and \eqref{e:riccati-ops_2})  
--, such that the optimal cost is given by  
\begin{equation*} 
\begin{split}
\qquad J_s(\hat{u}) &=\big(P_0(s) w_0,w_0\big)_H
+ 2\text{Re}\, \int_0^s \big(P_1(s,p)\eta(p),w_1\big)_H \,dp
\\[1mm]
& + \int_0^s \!\!\int_0^s \big(P_2(s,p,q)\eta(p),\eta(q)\big)_U\,dp\,dq
\equiv \big(P(s)X_0,X_0\big)_{Y_s}\,.
\end{split}
\end{equation*}
$P_0(s)$ and  $P_2(s,p,q)$ are self-adjoint and non-negative operators in the respective functional
spaces $H$ and $L^2(0,s;U)$; in addition, it holds 
\begin{equation*}
P_2(s,p,q)=P_2(s,q,p)\,.
\end{equation*}

\smallskip

\item[\bf S4.] 
The optimal control admits the following representation in closed-loop form:
\begin{equation*} 
\begin{split}
\hat{u}(t,s,X_0)&=-\big[B^*P_0(t)+P_1(t,t)^*\big]\hat{w}(t,s,X_0)
\\
& \qquad -\int_0^t \big[B^*P_1(t,p)+P_2(t,p,t)\big]\theta(p)\,dp\,,
\end{split}
\end{equation*}
with $\theta(\cdot)=\hat{u}(\cdot,0,w_0)\big|_{[0,t]}$
and the operators $P_i$ are given by the formulas \eqref{e:riccati-ops_2} (originally, \eqref{e:riccati-ops}), $i\in \{0,1,2\}$. 

\smallskip

\item[\bf S5.] 
The operators $P_0(t)$, $P_1(t,p)$, $P_2(t,p,q)$ -- as from {\bf S3.} --
satisfy the following coupled system of equations, for every $t\in[0,T)$, $s,q\in [0,t]$, and for any $x,y\in \cD(A)$, $v,u\in U$:
\begin{equation} \label{e:DRE}
\hspace{3mm}
\begin{cases}
&\frac{d}{dt}\big(P_0(t)x,y\big)_H +\big(P_0(t)x,Ay\big)_H 
+ \big(Ax,P_0(t)y\big)_H + \big(C^*Cx,y\big)_H  
\\[1mm] 
&\myspace - \big([B^*P_0(t)+P_1(t,t)^*]x,[B^*P_0(t)+P_1(t,t)^*]y\big)_U=0 
\\[2mm]
&\frac{\partial}{\partial t} \big(P_1(t,p)v,y\big)_H  + \big(P_1(t,p)v,Ay\big)_H 
+ \big(k(t-p)Bv,P_0(t)y\big)_H 
\\[1mm] 
&\qquad\quad
- \big([B^*P_1(t,p)+P_2(t,p,t)]v,[B^*P_0(t)+P_1(t,t)^*]y\big)_U =0
\\[2mm]
& \frac{\partial}{\partial t}\big( P_2(t,p,q)u,v\big)_U
+\big(P_1(t,p)u,k(t-q)Bv\big)_H
+\big(k(t-p)Bu,P_1(t,q)v\big)_H
\\[1mm]
&\qquad\quad  -\big([B^*P_1(t,p)+P_2(t,p,t)]u,[B^*P_1(t,q)+P_2(t,q,t)]v\big)_U=0
\end{cases}
\end{equation}
with final conditions
\begin{equation} \label{e:final}
P_0(T)=0\,, \; P_1(T,p)=0\,, \; P_2(T,p,q)=0\,.
\end{equation} 

\smallskip

\item[\bf S6.] {\bf (Uniqueness)}
There exists a unique triplet $(P_0(t),P_1(t,p),P_2(t,p,q))$ that solves the coupled system \eqref{e:DRE} and fulfils the final conditions \eqref{e:final}, within the class of linear bounded operators (in the respective spaces), the former and the latter being self-adjoint and non-negative.  
\end{enumerate}

\end{theorem}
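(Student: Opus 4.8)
The plan is to show that \emph{every} admissible solution of the coupled system \eqref{e:DRE}--\eqref{e:final} coincides with the optimal cost operator $P$ produced in statement S3, which by statement S5 is itself such a solution; uniqueness then follows at once. This is the route taken for the memoryless Riccati equation in \cite{las-trig-redbooks}, here adapted to the time-dependent state space $Y_t$ along the lines of \cite[Section 6]{ac-bu-memory1_2024}. So let $(\hat P_0,\hat P_1,\hat P_2)$ be an arbitrary solution in the prescribed class (bounded, with $\hat P_0,\hat P_2$ self-adjoint and non-negative and $\hat P_2(t,p,q)=\hat P_2(t,q,p)$), and assemble it into a single family of operators $\hat P(t)\colon Y_t\to Y_t$ via the quadratic form
\[
\big(\hat P(t)X,X\big)_{Y_t}=\big(\hat P_0(t)w,w\big)_H+2\,\text{Re}\int_0^t\big(\hat P_1(t,p)v(p),w\big)_H\,dp+\int_0^t\!\!\int_0^t\big(\hat P_2(t,p,q)v(p),v(q)\big)_U\,dp\,dq,
\]
for $X=(w,v)\in Y_t$.

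First I would fix $s\in[0,T)$ and $X_0=(w_0,\eta)\in Y_s$, and introduce the candidate closed-loop control $\tilde u$ defined by the feedback law of statement S4 with $\hat P_i$ in place of $P_i$, the history $v(\cdot)$ being $\eta$ on $[0,s)$ and $\tilde u$ on $[s,t]$. Coupled with the mild equation \eqref{e:mild-sln_s}, this is a linear Volterra-type system for the pair $(\tilde u,w)$; its well-posedness in $C([s,T],U)\times C([s,T],H)$ is obtained by a standard contraction/successive-approximation argument, using the boundedness of the $\hat P_i$ and the regularity of the kernels.

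The heart of the matter is the differentiation of $t\mapsto\big(\hat P(t)X(t),X(t)\big)_{Y_t}$ along an arbitrary admissible trajectory, with $X(t)=(w(t),v(\cdot))$ the augmented state driven by a control $u$. Because the inner product lives in the \emph{moving} space $Y_t=H\times L^2(0,t;U)$, the differentiation yields, besides the explicit $\partial_t\hat P_i$ contributions, boundary terms coming from the upper limits $p=t$, $q=t$ of the integrals, which carry the current value $v(t)=u(t)$ and generate precisely the traces $\hat P_1(t,t)$ and $\hat P_2(t,p,t)$. Substituting the dynamics and then the three equations of \eqref{e:DRE}, and completing the square, I expect to reach the identity
\[
\frac{d}{dt}\big(\hat P(t)X(t),X(t)\big)_{Y_t}=-\|Cw(t)\|_H^2-\|u(t)\|_U^2+\|u(t)-\tilde\Psi(t)\|_U^2,
\]
where $\tilde\Psi(t)$ is exactly the feedback operator of statement S4 evaluated with $\hat P_i$. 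Matching the boundary contributions from the moving domain against the $\hat P_1(t,t)$ and $\hat P_2(t,p,t)$ terms of \eqref{e:DRE} is the delicate point, and it is here that the structure of the system was designed to close: this is the main obstacle, foreshadowed in item (v) of the Introduction.

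With this identity in hand the conclusion is routine. Integrating over $[s,T]$ and using $\hat P(T)=0$ gives $\big(\hat P(s)X_0,X_0\big)_{Y_s}=J_s(u)-\int_s^T\|u-\tilde\Psi\|_U^2\,dt$ for every admissible $u$. Taking $u=\hat u$ (the optimal control) yields $\big(\hat P(s)X_0,X_0\big)_{Y_s}\le J_s(\hat u)=\big(P(s)X_0,X_0\big)_{Y_s}$ by statement S3, while taking $u=\tilde u$ annihilates the square and gives $\big(\hat P(s)X_0,X_0\big)_{Y_s}=J_s(\tilde u)\ge J_s(\hat u)=\big(P(s)X_0,X_0\big)_{Y_s}$ by optimality of $\hat u$. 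Hence $\big(\hat P(s)X_0,X_0\big)_{Y_s}=\big(P(s)X_0,X_0\big)_{Y_s}$ for all $X_0\in Y_s$ and all $s$, so $\hat P(s)=P(s)$ as operators on $Y_s$. Finally I would recover the three components separately: the choice $X_0=(w_0,0)$ forces $\hat P_0=P_0$, the choice $X_0=(0,\eta)$ with $\eta$ ranging over $L^2(0,s;U)$ together with polarization gives $\hat P_2=P_2$ (the assumed symmetry removing any ambiguity), and the mixed terms then pin down $\hat P_1=P_1$, completing the proof.
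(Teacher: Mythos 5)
Your proposal addresses only statement S6, taking S1--S5 as given, so as a proof of the full theorem it is incomplete; I will assess it as a proof of the uniqueness part. For that part your route is genuinely different from the paper's. You follow the classical verification-theorem scheme: assemble an arbitrary admissible solution $(\hat P_0,\hat P_1,\hat P_2)$ into a quadratic form on $Y_t$, differentiate it along trajectories, complete the square, and identify $\big(\hat P(s)X_0,X_0\big)_{Y_s}$ with the optimal cost. The paper instead rewrites the system \eqref{e:DRE} as a single quadratic operator equation \eqref{e:big-DRE} on $Y_t$, takes the difference $V=P-Q$ of two solutions, passes to the mild form \eqref{e:mildV}, and runs a Gronwall estimate on $r\mapsto\|V(r)\|_{\cL(Y_t)}$ (after establishing its measurability via the lower semicontinuity Lemma~\ref{l:lsc}), exploiting the nesting $Y_t\subseteq Y_\tau$. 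Your approach dispenses with the measurability lemma and yields, as a by-product, the identification of any solution with the optimal cost operator; the paper's approach avoids constructing a closed-loop trajectory for an abstract solution and reduces everything to a linear integral inequality.

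There is, however, a concrete gap at the heart of your argument: the fundamental identity $\frac{d}{dt}\big(\hat P(t)X(t),X(t)\big)_{Y_t}=-\|Cw(t)\|_H^2-\|u(t)\|_U^2+\|u(t)-\tilde\Psi(t)\|_U^2$ is announced (``I expect to reach'') rather than derived. For the specific triplet \eqref{e:riccati-ops_2} this is essentially the computation of Section~\ref{s:existence} run in reverse, but for an \emph{abstract} solution you have no explicit representation to lean on. You must justify (i) that $p\mapsto\hat P_1(t,p)$ and $(p,q)\mapsto\hat P_2(t,p,q)$ admit traces at $p=t$, $q=t$ with enough regularity for the moving-boundary contributions to be meaningful and to match the terms $\hat P_1(t,t)$, $\hat P_2(t,p,t)$ appearing in \eqref{e:DRE} --- the admissible class in S6 does not obviously provide this; and (ii) that the closed-loop Volterra system for $(\tilde u,w)$ is well posed, which requires a uniform-in-$t$ bound on $\hat P_1(t,t)^*$ and $\hat P_2(t,\cdot,t)$ that again is not automatic from the stated class. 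Until these points are supplied, the completion-of-squares identity --- precisely where the difficulty of the time-dependent state space $Y_t$ concentrates, as anticipated in item (v) of the Introduction --- remains a claim rather than a proof.
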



\subsection{An overview}
The paper is organized as follows.
In Section \ref{s:feedback} we deal with the statements S1., S3. and S4. of Theorem~\ref{t:main}: 
the optimality condition allows to obtain a first feedback representation of the unique optimal control, that is \eqref{e:feedback_1}.
Next, we identify three operators $P_i$ ($i=0,1,2$) which are building blocks of the quadratic form representing the optimal cost.
That the said operators actually occur in the representation formula \eqref{e:feedback_1} necessitates that novel expressions for the said operators are devised; this key (and technical) task is accomplished in Lemma~\ref{l:key}.
Lemma~\ref{l:key} then enables us to attain a definitive feedback representation of the optimal
solution, that is \eqref{e:feedback_2}.

Section~\ref{s:existence} is entirely devoted to the proof of statement S5., namely, to derive the coupled system of three differential equations satisfied by the operators $P_i$, $i=0,1,2$. 
Section~\ref{s:uniqueness} focuses on the property of uniqueness.

The proof of the instrumental transition properties satisfied by the optimal control and evolution (implying statement S2.) has been postponed to an Appendix.


\section{The unique optimal control. Towards a feedback formula} \label{s:feedback}
Our goal in this section is twofold. 
We aim at attaining a {\em closed-loop} representation of the optimal control, as well as at disclosing that the said representation formula involves certain operators which are among the very same building-blocks of the optimal cost.

More precisely, the optimality condition will bring about a first feedback representation of the unique optimal control $\hat{u}=\hat{u}(t,s,X_0)$ -- an {\em open-loop} one at the outset -- depending both on the optimal evolution $\hat{w}(t,s,X_0)$, pointwise in time, and also on the full history of the optimal control up to time $t$; see formula \eqref{e:feedback_1}. 
This preliminary will necessitate that certain transition properties of the optimal solution
and optimal evolution are ascertained, and this is done in the Appendix.

On another side, we will see that the optimal cost $J_s(\hat{u})$ is a quadratic form on the state space $Y_s$, involving three linear and bounded operators.
However, unlike the memoryless case, these operators -- defined by \eqref{e:riccati-ops} -- 
cannot be singled out readily from the representation \eqref{e:feedback_1} of the optimal control.

The next step -- a crucial one from a technical point of view -- is then a clever rewriting of
the aforesaid operators, accomplished in Lemma~\ref{l:key}. 
This result is the key to establish a second and definitive feedback representation of $\hat{u}$.


\subsection{Optimality condition, a first feedback formula}
By inserting the expression \eqref{e:formula_1} of $w(t)=w(t,s,X_0)$ in the quadratic functional \eqref{e:cost_s}, we find readily
\begin{equation} \label{e:q-form}
J_s(u,X_0)= \big(M_s X_0,X_0\big)_{Y_s}
+ 2 \text{Re}\, \big(N_s X_0,u\big)_{L^2(s,T;U)}
+ \big(\Lambda_s u,u\big)_{L^2(s,T;U)}\,,
\end{equation}
where
\begin{equation} \label{e:maiuscoli}
\begin{split}
\langle M_s X_0,X_0\rangle_{Y_s}
&:=\int_s^T \langle C^*C E(t,s)X_0,E(t,s)X_0\rangle_H\,dt
\\[1mm]
\big[N_s X_0\big](\cdot)&:=\big[\big(L_s^*+H_s^*\big) C^*C E(\cdot,s)X_0\big](\cdot)
\\[1mm]
\Lambda_s & :=I+\big(L_s^*+H_s^*\big)C^*C\big(L_s+H_s\big)\,,
\end{split}
\end{equation}
where $I$ denotes the identity operator on $L^2(s,T;U)$.

A standard argument is invoked now, owing to the quadratic structure of the functional: since 
$C^*C\ge 0$, then $\Lambda_s\ge I$; namely, the cost functional is coercive in the space 
$L^2(s,T;U)$ of admissible controls, and hence there exists a unique optimal control $\hat{u}$
minimizing the cost \eqref{e:cost_s}.
Owing to \eqref{e:formula_1}, we find that the component $\hat{w}$ of the optimal state belongs to 
$C([s,T],H)$. 

The optimality condition
\begin{equation}\label{e:optimality_0}
\Lambda_s \hat{u}+N_s X_0=0
\end{equation}
yields, on the one side, 
\begin{equation} \label{e:from-optimality}
\hat{u}=-\Lambda_s^{-1} N_s X_0\,, \quad X_0=\begin{pmatrix}w_0
\\[1mm] 
\eta(\cdot)\end{pmatrix}\,, 
\quad \eta(\cdot)=\hat{u}(\cdot,0,X_0)\big|_{[0,s]}\,. 
\end{equation}
On the other side, recalling \eqref{e:maiuscoli} and rewriting explicitly $\Lambda_s$, 
we attain first 
\begin{equation} \label{e:start-for-transition}
\big[I+\big(L_s^*+H_s^*\big)C^*C\big(L_s+H_s\big)\big]\hat{u}
+ \big(L_s^*+H_s^*\big)C^*CE(\cdot,s)X_0=0\,,
\end{equation}
which combined with \eqref{e:formula_1} yields
\begin{equation*}
\begin{split}
\hat{u}(\cdot,s,X_0)&=- \big(L_s^*+H_s^*\big)C^*C \big[E(\cdot,\tau) X_0 
+ \big(L_s+H_s\big)\hat{u}(\cdot,s,X_0)\big]
\\
&=- \big(L_s^*+H_s^*\big)C^*C \hat{w}(\cdot,s,X_0)
\end{split}
\end{equation*}
which explicitly reads as 
\begin{equation} \label{e:feedback_0}
\begin{split}
\hat{u}(t,s,X_0)=
&= -\int_t^T B^*e^{A^*(\sigma-t)}C^*C\hat{w}(\sigma,s,X_0)\,d\sigma
\\
&\qquad -\int_t^T \int_t^q B^*e^{A^*(q-p)}k(p-t)\,dp\,C^*C\hat{w}(q,s,X_0)\,dq\,.
\end{split}
\end{equation}
This is a very first representation of the optimal control $\hat{u}(\cdot)$ in terms of the component $\hat{w}(\cdot)$ of the optimal state. 
The regularity in time of $\hat{u}$ is enhanced from $L^2(s,T:U)$ to $C([s,T],U)$, thus confirming
the statement S1. of Theorem~\ref{t:main}.

\begin{remark}
\begin{rm}
We note that the dependence of $\hat{u}(t,s,X_0)$ on $s$ is via $\hat{w}(\cdot,s,X_0)$ only.
\end{rm}
\end{remark}


Let us go back to the optimality condition \eqref{e:optimality_0}.
Recalling the definitions \eqref{e:operator-E} and \eqref{e:operator-cK} of the operators $E$ and 
$\cK$, respectively, we see that 
\begin{equation} \label{e:opt-control}
\begin{split}
\hat{u}(t,s,X_0)&=-\Lambda_s^{-1} N_s X_0
=-\Lambda_s^{-1} \big(L_s^*+H_s^*\big) C^*C E(\cdot,s)X_0
\\
&= -\Lambda_s^{-1} \big(L_s^*+H_s^*\big) C^*C e^{A(\cdot-s)}w_0
-\Lambda_s^{-1} \big(L_s^*+H_s^*\big) C^*C \int_0^s \lambda(t,p,s)\eta(p)\,dp
\\
&=: \psi_1(t,s)w_0+ \int_0^s \psi_2(t,p,s)\eta(p)\,dp\,,
\end{split}
\end{equation}
where we have set
\begin{equation} \label{e:psi_i}
\begin{split}
\psi_1(t,s)&:= -\Lambda_s^{-1} \big[\big(L_s^*+H_s^*\big) C^*C e^{A(\cdot-s)}\big](t)\,,
\\[1mm]
\psi_2(t,p,s)&:= -\big[\Lambda_s^{-1} \big(L_s^*+H_s^*\big) C^*C \lambda(\cdot,p,s)\big](t)\,.
\end{split}
\end{equation}
Inserting the expression \eqref{e:opt-control} of $\hat{u}$ in \eqref{e:formula_1}, 
we get
\begin{equation} \label{e:opt-state-component}
\begin{split}
\hat{w}(t,s,X_0)&=e^{(t-s)A} w_0+ \int_0^s \lambda(t,p,s)\eta(p)\,dp 
+\big(L_s+H_s)\hat{u}(t)
\\
& = e^{(t-s)A} w_0+ \int_0^s \lambda(t,p,s)\eta(p)\,dp 
\\
& \qquad + \int_s^t e^{A(t-\sigma)}B\Big[\psi_1(\sigma,s)w_0
+ \int_0^s \psi_2(\sigma,p,s)\eta(p)\,dp\Big]d\sigma
\\
& \qquad 
+ \int_s^t \int_\sigma^t e^{A(t-q)}k(q-\sigma)\,dq 
B\Big[\psi_1(\sigma,s)w_1+ \int_0^s \psi_2(\sigma,p,s)\eta(p)\,dp\Big]d\sigma 
\\
&=: Z_1(t,s)w_0+ \int_0^s Z_2(t,p,s)\eta(p)\,dp\,,
\end{split}
\end{equation}
having set 
\begin{equation} \label{e:Z_i}
\begin{split}
Z_1(t,s)&:= e^{A(t-s)}+\int_s^t e^{A(t-\sigma)}B\psi_1(\sigma,s)\,d\sigma
+ \int_s^t \int_\sigma^t e^{A(t-q)}k(q-\sigma)\,dq B\psi_1(\sigma,s)\,d\sigma
\\[1mm]
& 
\equiv e^{A(t-s)}+\big[\big(L_s+H_s\big)\psi_1(\cdot,s)\big](t)\,,
\\[2mm]
Z_2(t,p,s)&:= \lambda(t,p,s)+ \int_s^t e^{A(t-\sigma)}B\psi_2(\sigma,p,s)\,d\sigma
\\
& \qquad
+ \int_s^t \int_\sigma^t e^{A(t-q)}k(q-\sigma)\,dq B\psi_2(\sigma,p,s)\,d\sigma\,.
\end{split}
\end{equation}

We now return to the very first representation \eqref{e:feedback_0} of the optimal control.
Set
\begin{equation*}
X_1=\begin{pmatrix}w_1\\[1mm] \theta(\cdot)\end{pmatrix}, 
\;  w_1=\hat{w}(t,s,X_0)\,, \; \theta(\cdot)=\hat{u}(\cdot,0,w_0)\big|_{[0,t]}
\end{equation*}
and use the transition property of the optimal pair (whose proof is deferred to the Appendix for conciseness) to find
\begin{equation*}
\begin{split}
\hat{u}(t,s,X_0)&= -\int_t^T B^*e^{A^*(\sigma-t)}C^*C \hat{w}(\sigma,t,X_1)\,d\sigma
\\
& \quad
-\int_t^T \int_t^q B^*e^{A^*(q-p)}k(p-t)^*\,dp\,C^*C \hat{w}(q,t,X_1)\,dq
\\
& = -\int_t^T B^*e^{A^*(\sigma-t)}C^*C Z_1(\sigma,t)w_1\,d\sigma
\\
& \quad
-\int_t^T B^*e^{A^*(\sigma-t)}C^*C  \int_0^t Z_2(\sigma,p,t)\theta(p)\,dp\,d\sigma
\\
& \quad
- \int_t^T \int_t^q B^*e^{A^*(q-p)}k(p-t)^*\,dp\, C^*C Z_1(q,t)w_1\,dq
\\
& \quad
-\int_t^T \int_t^q B^*e^{A^*(q-p)}k(p-t)^*\,dp \,C^*C \int_0^t Z_2(q,r,t)\theta(r)\,dr\,dq
\end{split}
\end{equation*}
which brings about the first representation formula
\begin{equation} \label{e:feedback_1}
\begin{split}
\hat{u}(t,s,X_0)
& = -\int_t^T B^*e^{A^*(\sigma-t)}C^*C Z_1(\sigma,t)w_1\,d\sigma
\\
& \quad
- \int_t^T \int_t^q B^*e^{A^*(q-p)}k(p-t)^*\,dp\, C^*C Z_1(q,t)w_1\,dq
\\
& \quad
-\int_t^T B^*e^{A^*(\sigma-t)}\,C^*C \int_0^t Z_2(\sigma,p,t)\theta(p)\,dp\,d\sigma
\\
 & \quad -\int_t^T \lambda(q,t,t)^*\,C^*C \int_0^t Z_2(q,r,t)\theta(r)\,dr\,dq\,.
\end{split}
\end{equation}
We note that although $\hat{u}(t,s,X_0)$ might seem to be independent of $s$, this is not the case: the dependence on $s$ is via $w_1$.
 
 
\subsection{The optimal cost operators}
We now evaluate the quadratic functional $J_s(u)$ on the optimal control $\hat{u}$, by inserting the expressions \eqref{e:opt-control} and \eqref{e:opt-state-component} of the optimal pair (comprising the optimal control and the first component of the optimal state) in it, to find
\begin{equation*} 
\begin{split}
J_s(\hat{u})&=J_s(\hat{u},X_0)
= \int_s^T \Big[\|CZ_1(t,s)w_0\|_H^2+ \big\|C\int_0^s Z_2(t,p,s)\eta(p)\, dp\big\|_H^2\Big]\,dt
\\
& \quad + 2 \text{Re}\int_s^T \big(CZ_1(t,s)w_0,C\int_0^s Z_2(t,p,s)\eta(p)\, dp\big)_H\,dt
\\
& \quad + \int_s^T \Big[\|\psi_1(t,s)w_1\|_U^2+ \big\|\int_0^s \psi_2(t,p,s)\eta(p)\,dp\big\|_U^2\Big]\,dt
\\
& \quad + 2 \text{Re}\int_s^T \big(\psi_1(t,s)w_0,\int_0^s \psi_2(t,p,s)\eta(p)\,dp\big)_U\,dt\,,
\end{split}
\end{equation*}
which is rewritten as
\begin{equation} \label{e:J-expression}
\begin{split}
J_s(u)
& = \int_s^T \Big[\big(Z_1(t,s)^*C^*C Z_1(t,s) w_0,w_0\big)_H
+\big(\psi_1(t,s)^*\psi_1(t,s) w_0,w_0\big)_U\Big]\,dt
\\
& \quad + 2 \text{Re}\Big[\int_s^T \Big(\int_0^s Z_1(t,s)^*C^*C Z_2(t,p,s)\eta(p)\,dp,w_0\Big)_H\,dt
\\
& \qquad + \int_s^T \Big(\int_0^s \psi_1(t,s)^*\psi_2(t,p,s)\eta(p)\,dp,w_0\Big)_H\,dt\Big]
\\
& \quad + \int_s^T \Big[\|\psi_1(t,s)w_1\|_U^2+ \big\|\int_0^s \psi_2(t,p,s)\eta(p)\,dp\big\|_U^2\Big]\,dt
\\
& \quad + \int_s^T \int_0^s\int_0^s \big(Z_2(t,q,s)^*C^*C Z_2(t,p,s)\eta(p),\eta(q)\big)_U\,dp\,dq\,dt
\\
& \quad + \int_s^T \int_0^s\int_0^s \big(\psi_2(t,q,s)^*\psi_2(t,p,s)\eta(p),\eta(q)\big)_U\,dp\,dq\,dt\,.
\end{split}
\end{equation}
Thus, by introducing the operators $P_0(s)\in  \cL(H)$, $P_1(t,s)\in \cL(U,H)$, $P_2(s,p,q)\in \cL(U)$ defined by
\begin{subequations} \label{e:riccati-ops}
\begin{align}
P_0(s)&=\int_s^T \big[Z_1(t,s)^*C^*C Z_1(t,s)+\psi_1(t,s)^*\psi_1(t,s)\big]\,dt\,,
\label{e:P_0_v1}
\\[1mm]
P_1(s,p)&=\int_s^T \big[Z_1(t,s)^*C^*C Z_2(t,p,s)+\psi_1(t,s)^*\psi_2(t,p,s)\big]\,dt\,,
\label{e:P_1_v1}
\\[1mm]
P_2(s,p,q)&= \int_s^T \big[Z_2(t,q,s)^*C^*C Z_2(t,p,s)+\psi_2(t,q,s)^*\psi_2(t,p,s)\big]\,dt\,,
\label{e:P_2_v1}
\end{align}
\end{subequations} 
it is clear from \eqref{e:J-expression} that the optimal cost reads as the following quadratic
form on $Y_s$:
\begin{equation} \label{e:form}
\begin{split}
J_s(\hat{u})=J_s(\hat{u},X_0)
&=\big(P_0(s) w_0,w_0\big)_H
+ 2 \text{Re}\Big(\int_0^s P_1(s,p)\eta(p)\,dp,w_0\Big)_H
\\
& \myspace + \int_0^s\int_0^s \big(P_2(s,p,q)\eta(p),\eta(q)\big)_U\,dp\,dq\,.
\end{split}
\end{equation} 

\subsection{Second feedback formula}
The presence of the operators $P_0$, $P_1$ and $P_2$ (defined by \eqref{e:riccati-ops} and which 
are building-blocks of the quadratic form \eqref{e:form}) 
is not immediately apparent in the obtained formula \eqref{e:feedback_1} for the optimal strategy.
Pinpointing this fact requires that we deduce a suitable distinct representation of each operator,
beforehand.


\begin{lemma} \label{l:key}
With the operators $\psi_1(t,s)$ and $\psi_2(t,p,s)$ defined in \eqref{e:psi_i}, $Z_1(t,s)$ and $Z_2(t,p,s)$ defined in \eqref{e:Z_i}, then the optimal cost operators $P_i$ in \eqref{e:riccati-ops} can
be equivalently rewritten as follows, $i\in \{0,1,2\}$:
\begin{subequations} \label{e:riccati-ops_2}
\begin{align}
P_0(t)&=\int_t^T e^{A^*(\sigma-t)}C^*CZ_1(\sigma,t)\,d\sigma\,,
\label{e:P_0_v2}
\\[1mm]
P_1(t,p)&=\int_t^T e^{A^*(\sigma-t)}C^*C Z_2(\sigma,p,t)\,d\sigma\,,
\label{e:P_1_v2}
\\[1mm]
P_2(t,p,q)&=\int_t^T \lambda(\sigma,q,t)^*C^*C Z_2(\sigma,p,t)\,dp\,.
\label{e:P_2_v2}
\end{align}
\end{subequations}
Moreover, an additional (third) expression of $P_1(t,p)$ holds true:
\begin{equation} \label{e:P_1_v3}
P_1(t,p)=\int_t^T  Z_1(\sigma,t)^*\,C^*C\lambda(\sigma,p,t)\,d\sigma\,,
\end{equation}
so that in particular
\begin{equation} \label{e:P_1^*(t,t)}
P_1(t,t)^* = \int_t^T \lambda(\sigma,t,t)^*\,C^*C Z_1(\sigma,t)\,d\sigma\,.
\end{equation}

\end{lemma}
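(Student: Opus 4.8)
The engine of the whole lemma is a pair of \emph{resolvent identities} that trade each $\psi_i$ for the corresponding $Z_i$. Writing $\Lambda_s = I + (L_s^*+H_s^*)C^*C(L_s+H_s)$ and applying $\Lambda_s$ to the definition \eqref{e:psi_i} of $\psi_1$, I would move the term $(L_s^*+H_s^*)C^*C(L_s+H_s)\psi_1$ to the right and recognize, via the definition \eqref{e:Z_i} of $Z_1$, the bracket $e^{A(\cdot-s)} + (L_s+H_s)\psi_1(\cdot,s) = Z_1(\cdot,s)$. This yields
\[
\psi_1(\cdot,s) = -(L_s^*+H_s^*)\,C^*C\,Z_1(\cdot,s),
\]
and the identical manipulation with $\lambda(\cdot,p,s)$ in place of $e^{A(\cdot-s)}$ gives $\psi_2(\cdot,p,s) = -(L_s^*+H_s^*)C^*C\,Z_2(\cdot,p,s)$. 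These two relations are the crux; everything else is bookkeeping.

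Next I would fix the functional-analytic framework so that the algebra is legitimate. I regard $Z_1(\cdot,s),\psi_1(\cdot,s)$ and $Z_2(\cdot,p,s),\psi_2(\cdot,p,s)$ as bounded operators from $H$ (resp.\ $U$) into $L^2(s,T;H)$ (resp.\ $L^2(s,T;U)$), with $C^*C$ acting pointwise and $(L_s+H_s)\colon L^2(s,T;U)\to L^2(s,T;H)$ as in \eqref{e:maiuscoli}. The only analytic input I need, beyond the two identities above, is the pair of companion relations read directly off \eqref{e:Z_i},
\[
(L_s+H_s)\psi_1(\cdot,s) = Z_1(\cdot,s) - e^{A(\cdot-s)}, \qquad (L_s+H_s)\psi_2(\cdot,p,s) = Z_2(\cdot,p,s) - \lambda(\cdot,p,s),
\]
together with the adjoint identity $(L_s^*+H_s^*)^* = L_s+H_s$ and the explicit kernel of $L_s^*+H_s^*$ already exhibited in \eqref{e:feedback_0}.

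The computation for $P_0$ then runs as follows. In \eqref{e:riccati-ops} I substitute one factor $\psi_1 = -(L_s^*+H_s^*)C^*C\,Z_1$ into the product $\psi_1^*\psi_1$, leaving the other factor intact, and push the operator $(L_s^*+H_s^*)$ onto $\psi_1^*$ as its adjoint $(L_s+H_s)$; the companion relation turns $(L_s+H_s)\psi_1$ into $Z_1 - e^{A(\cdot-s)}$. The resulting contribution is $-\,Z_1^*C^*C\,Z_1 + (e^{A(\cdot-s)})^*C^*C\,Z_1$, whose first term cancels the explicit $Z_1^*C^*C\,Z_1$ in \eqref{e:riccati-ops}, leaving precisely $\int_s^T e^{A^*(\sigma-s)}C^*C\,Z_1(\sigma,s)\,d\sigma$, i.e.\ \eqref{e:P_0_v2}. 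The same cancellation mechanism gives \eqref{e:P_1_v2} and \eqref{e:P_2_v2}: for $P_1$ I convert the $\psi_2$ factor (keeping $\psi_1^*$) to expose $e^{A^*(\cdot-s)}$, while converting instead the $\psi_1$ factor and using the $\lambda$-companion relation produces the alternative form \eqref{e:P_1_v3}; for $P_2$ I convert the right factor $\psi_2(\cdot,p,s)$ and apply $(L_s+H_s)\psi_2(\cdot,q,s) = Z_2 - \lambda$ on the adjoint side, so that the surviving term is $\lambda(\cdot,q,s)^*C^*C\,Z_2(\cdot,p,s)$. Finally \eqref{e:P_1^*(t,t)} follows from \eqref{e:P_1_v3} (at $s=t$) by taking Hilbert-space adjoints and setting $p=t$.

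The one place demanding genuine care --- the expected main obstacle --- is not the cancellation but the rigorous handling of the adjoints: one must verify that $\psi_i$ and $Z_i$ are bounded between the stated spaces, that the pointwise integral kernels (in particular the mixed double-integral kernel of $H_s^*$ and the operator $\lambda$ from \eqref{e:operator-cK}) are correctly transposed, and that $\Lambda_s^{-1}$ may legitimately be moved through these manipulations. Matching the abstract adjoint $(L_s^*+H_s^*)C^*C$ against the explicit kernel appearing in \eqref{e:feedback_0} is what certifies that pushing $(L_s^*+H_s^*)$ across as $(L_s+H_s)$ introduces no spurious boundary terms.
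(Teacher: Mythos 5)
Your proposal is correct and follows essentially the same route as the paper: the identities $\psi_1(\cdot,t)=-(L_t^*+H_t^*)C^*C Z_1(\cdot,t)$ and $\psi_2(\cdot,p,t)=-(L_t^*+H_t^*)C^*C Z_2(\cdot,p,t)$ that you extract up front are exactly the cancellation $\psi^*[\Lambda_t\psi-\Lambda_t\psi]=0$ that the paper performs inline, and the subsequent bookkeeping (including obtaining \eqref{e:P_1_v3} by converting the $\psi_1$ factor instead of the $\psi_2$ factor, which is in fact slightly more direct than the paper's explicit computation of $P_1(t,p)^*$) matches the paper's argument term for term.
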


\begin{proof}
{\bf 1.} We start from the definition \eqref{e:P_0_v1} of $P_0$:
recall the respective expressions for the operators $\psi_i$ and $Z_i$, $i=1,2$, to infer first
\begin{equation*}
\begin{split}
P_0(t)&=\int_t^T \Big\{\big[e^{A^*(\cdot-t)}+\psi_1(\cdot,t)^*\big(L_t^*+H_t^*\big)\big]
C^*C \big[e^{A(\cdot-t)}+\big(L_t+H_t\big)\psi_1(\cdot,t)\big]
\\
& \quad 
+\big[\psi_1(\sigma,t)^*\psi_1(\sigma,t)\big]\Big\}\,d\sigma\,
\\
&=\int_t^T e^{A^*(\sigma-t)}C^*C e^{A(\sigma-t)}\,d\sigma
+ \int_t^T e^{A^*(\sigma-t)}\big(L_t^*+H_t^*\big)\psi_1(\sigma,t)\,d\sigma
\\
& \quad
+ \int_t^T \Big\{\psi_1(\cdot,t)^*\big[\big(L_t^*+H_t^*\big)C^*C e^{A(\cdot-t)}\big]
\\
& \qquad + \psi_1(\cdot,t)^*\big(L_t^*+H_t^*\big)C^*C \big(L_t+H_t\big)\psi_1(\cdot,t) +\psi_1(\cdot,t)^*\psi_1(\cdot,t)\Big\}\,d\sigma\,.
\end{split}
\end{equation*}
Then
\begin{equation*}
\begin{split}
P_0(t)&=
\int_t^T e^{A^*(\sigma-t)}C^*C e^{A(\sigma-t)}\,d\sigma
+ \int_t^T e^{A^*(\sigma-t)}\big[\big(L_t^*+H_t^*\big)\psi_1(\cdot,t)\big](\sigma)\,d\sigma
\\
& \quad
+ \int_t^T \psi_1(\sigma,t)^*\cancel{\Big[\Lambda_t \psi_1(\cdot,t) - \Lambda_t \psi_1(\cdot,t)\Big]}(\sigma)\,d\sigma 
\\
& = \int_t^T e^{A^*(\sigma-t)}C^*C
\Big[e^{A(\sigma-t)}+ \big[\big(L_t^*+H_t^*\big)\psi_1(\cdot,t \big](\sigma)\Big]\,d\sigma
\\
& = \int_t^T e^{A^*(\sigma-t)}C^*CZ_1(\sigma,t)\,d\sigma\,,
\end{split}
\end{equation*}
which confirms \eqref{e:P_0_v2}.

\smallskip
{\bf 2.} Next, starting again from \eqref{e:P_1_v1} we compute
\begin{equation*}
\begin{split}
P_1(t,p)&=\int_t^T \Big[Z_1(\sigma,t)^*C^*CZ_2(\sigma,p,t)
+\psi_1(\sigma,t)^*\psi_2(\sigma,p,t)\Big]\,d\sigma
\\
&= \int_t^T \Big\{\big[e^{A^*(\sigma-t)}+\psi_1(\sigma,t)^*\big(L_t^*+H_t^*\big)\big]
C^*C \big[\lambda(\cdot,p,t)+\big(L_t+H_t\big)\psi_2(\cdot,p,t)\big](\sigma)
\\
& \myspace 
+\big[\psi_1(\sigma,t)^*\psi_2(\sigma,p,t)\big]\Big\}\,d\sigma\,
\\
&=\int_t^T e^{A^*(\sigma-t)}C^*C \lambda(\sigma,p,t)\,d\sigma
+ \int_t^T e^{A^*(\sigma-t)}C^*C\big(L_t^*+H_t^*\big)\psi_2(\cdot,p,t)\,d\sigma
\\
& \quad
+ \int_t^T \psi_1(\cdot,t)^*\big[\big(L_t^*+H_t^*\big)C^*C \lambda(\cdot,p,t)\big](\sigma)\,d\sigma
\\
& \qquad
+ \int_t^T \psi_1(\cdot,t)^*\Big[\big(L_t^*+H_t^*\big)C^*C \big(L_t+H_t\big)\psi_2(\cdot,p,t)\Big](\sigma)\,d\sigma
\\
& \myspace +\int_t^T\psi_1(\sigma,t)^*\psi_2(\sigma,p,t)\,d\sigma
\end{split}
\end{equation*}
to find
\begin{equation*}
\begin{split}
P_1(t,p)&=\int_t^T e^{A^*(\sigma-t)}C^*C \Big[\lambda(\sigma,p,t)
+\big[\big(L_t^*+H_t^*\big)\psi_2(\cdot,p,t)\big](\sigma)\Big]\,d\sigma
\\
& \qquad +\int_t^T \psi_1(\cdot,t)^* 
\cancel{\Big[ -\Lambda_t\psi_2(\cdot,p,t) + \Lambda_t\psi_2(\cdot,p,t)\Big]}\,d\sigma
\\
& = \int_t^T e^{A^*(\sigma-t)}C^*C Z_2(\sigma,p,t)\,d\sigma\,,
\end{split}
\end{equation*}
thus establishing \eqref{e:P_1_v2}.

\smallskip
Since it will be crucial for the purpose of attaining a more eloquent representation formula for the optimal control than the former \eqref{e:feedback_1}, we compute the ajoint operator of $P_1(t,p)$
explicitly. 
%
With \eqref{e:P_1_v2} as a starting point, we substitute the expression of $Z_2(\sigma,p,t)^*$
first, to find
\begin{equation*}
\begin{split}
P_1(t,p)^*& =\int_t^T Z_2(\sigma,p,t)^* C^*C e^{A(\sigma-t)}\,d\sigma
\\
& = \int_t^T \lambda(\sigma,p,t)^*\, C^*C e^{A(\sigma-t)}\,d\sigma
+ \int_t^T\psi_2(\sigma,p,t)^*\big[\big(L_t^*+H_t^*\big) C^*C e^{A(\cdot-t)}\big](\sigma)\,d\sigma\,.
\end{split}
\end{equation*}
Next, we utlize again the expressions of $Z_1(\sigma,t)$ and $\psi_1(\cdot,t)$ in \eqref{e:Z_i} and \eqref{e:psi_i}, respectively, to rewrite as follows:
\begin{equation*}
\begin{split}
P_1(t,p)^*&=\int_t^T \lambda(\sigma,p,t)^*\,C^*C \Big[Z_1(\sigma,t)
- \big[\big(L_t+H_t\big)\psi_1(\cdot,t)\big](\sigma)\Big]\,d\sigma
\\
& \qquad 
+ \int_t^T\psi_2(\sigma,p,t)^*\big[-\Lambda_t\psi_1(\cdot,t)\big](\sigma)\,d\sigma
\\
&= \int_t^T \lambda(\sigma,p,t)^*\,C^*C Z_1(\sigma,t)\,d\sigma
- \int_t^T \lambda(\sigma,p,t)^*\,C^*C \big[\big(L_t+H_t\big)\psi_1(\cdot,t)\big](\sigma)\Big]\,d\sigma
\\
& \qquad 
- \int_t^T\psi_2(\sigma,p,t)^*\big[\Lambda_t\psi_1(\cdot,t)\big](\sigma)\,d\sigma\,,
\end{split}
\end{equation*}
which yields
\begin{equation} \label{e:P_1*}
\begin{split}
P_1(t,p)^*&= \int_t^T \lambda(\sigma,p,t)^*\,C^*C Z_1(\sigma,t)\,d\sigma
- \int_t^T \lambda(\sigma,p,t)^*\,C^*C \big[\big(L_t+H_t\big)\psi_1(\cdot,t)\big](\sigma)\,d\sigma
\\
&\; \qquad - \int_t^T \Big[\big[-\lambda(\sigma,p,t)^*\,C^*C \big(L_t+H_t\big)\Lambda_t^{-1}\big]\Lambda_t\psi_1(\cdot,t)\Big](\sigma)\,d\sigma
\\
& = \int_t^T \lambda(\sigma,p,t)^*\,C^*C Z_1(\sigma,t)\,d\sigma\,,
\end{split}
\end{equation}
as the second and third terms cancel.
The representation \eqref{e:P_1*} of $P_1(t,p)^*$ proves \eqref{e:P_1^*(t,t)}, while it establishes as well the (third) representation \eqref{e:P_1_v3} of $P_1(t,p)$.


\smallskip
{\bf 3.} It remains to give a convenient representation of the operator $P_2(s,p,q)$.
We have
\begin{equation*}
\begin{split}
P_2(t,p,t)&=\int_t^T \Big[Z_1(\sigma,t,t)^*C^*CZ_2(\sigma,p,t)
+\psi_2(\sigma,t,t)^*\psi_2(\sigma,p,t)\Big]\,d\sigma
\\
&= \int_t^T \Big[\big[\lambda(\sigma,t,t)^*+\psi_2(\cdot,t,t)^*\big(L_t^*+H_t^*\big)\big]
C^*C \big[\lambda(\cdot,p,t)+\big(L_t+H_t\big)\psi_2(\cdot,p,t)\big](\sigma)
\\
& \myspace 
+\big[\psi_2(\sigma,t,t)^*\psi_2(\sigma,p,t)\Big]\,d\sigma
\\
&= \int_t^T \Big[\lambda(\sigma,t,t)^*\,C^*C\lambda(\sigma,p,t)
+\big[\lambda(\sigma,t,t)^*\,C^*C\big(L_t+H_t\big)\psi_2(\cdot,p,t)\big](\sigma)\Big]\,d\sigma
\\
& \qquad 
+ \int_t^T \Big[\psi_2(\sigma,t,t)^*\big[\big(L_t^*+H_t^*\big)\,C^*C\lambda(\cdot,p,t)\big](\sigma)
+ \psi_2(\sigma,t,t)^*\big[\big(L_t^*+H_t^*\big)\,C^*C\big(L_t+H_t\big)\psi_2(\cdot,p,t)\Big](\sigma)
\\
& \myspace + \psi_2(\sigma,t,t)^*\psi_2(\sigma,p,t)\Big]\,d\sigma
\\
& = \int_t^T \lambda(\sigma,t,t)^*\,C^*C Z_2(\sigma,p,t)\,d\sigma
+ \int_t^T \psi_2(\sigma,t,t)^* 
\cancel{\Big[-\Lambda_t\psi_2(\cdot,p,t)+\Lambda_t\psi_2(\cdot,p,t)\Big]}\,d\sigma
\\
& = \int_t^T \lambda(\sigma,t,t)^*\,C^*C Z_2(\sigma,p,t)\,d\sigma
\end{split}
\end{equation*}
which confirms the validity of \eqref{e:P_2_v2}. 

\end{proof}


Taking into account Lemma~\ref{l:key}, the representation formula \eqref{e:feedback_1}
is interpreted in a more eloquent way.

\begin{proposition} \label{p:feedback-formula}
Let $\hat{u}(t,s;X_0)$ be the optimal control for the minimization problem \eqref{e:mild-sln_s}-\eqref{e:cost_s}, with initial state $X_0$.
Then, the representation \eqref{e:feedback_1} of the optimal control $\hat{u}$ reads also as
\begin{equation} \label{e:feedback_2}
\begin{split}
\hat{u}(t,s,X_0)&=-\big[B^*P_0(t)+P_1(t,t)^*\big]\hat{w}(t,s,X_0)
\\
& \myspace -\int_0^t \big[B^*P_1(t,p)+P_2(t,p,t)\big]\theta(p)\,dp\,, 
\end{split}
\end{equation}
with 
$\theta(\cdot)=\hat{u}(\cdot,0,w_0)\big|_{[0,t]}$
and the operators $P_i$ as in \eqref{e:riccati-ops_2} (originally, in \eqref{e:riccati-ops}), 
$i\in \{0,1,2\}$. 

\end{proposition}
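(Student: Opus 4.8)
The plan is to substitute the alternative expressions for the optimal cost operators furnished by Lemma~\ref{l:key} directly into the first feedback formula \eqref{e:feedback_1}, and then to recognize each of its four terms as one of $B^*P_0(t)$, $P_1(t,t)^*$, $B^*P_1(t,p)$, or $P_2(t,p,t)$ acting either on $w_1=\hat{w}(t,s,X_0)$ or on the history $\theta(\cdot)$. No new analysis is needed beyond Lemma~\ref{l:key}: once the correct representations are in place, the identification is a matter of factoring the constant operator $B^*$ out of the integrals, recognizing the adjoint kernel $\lambda(\cdot,\cdot,\cdot)^*$, and interchanging orders of integration.

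First I would treat the two terms of \eqref{e:feedback_1} carrying $w_1$. In the first term, since $B^*$ is independent of the integration variable $\sigma$, I factor it out and compare the remaining integral with the representation \eqref{e:P_0_v2} of $P_0(t)$, identifying that term with $-B^*P_0(t)w_1$. For the second term, the key observation is that the inner kernel $\int_t^q B^*e^{A^*(q-p)}k(p-t)^*\,dp$ is precisely $\lambda(q,t,t)^*$, by the definition of $\lambda(\cdot,\cdot,\cdot)$ and its adjoint. With this in hand, the second term matches the representation \eqref{e:P_1^*(t,t)} of $P_1(t,t)^*$, hence equals $-P_1(t,t)^*w_1$. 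Summing the two yields the closed-loop term $-[B^*P_0(t)+P_1(t,t)^*]\hat{w}(t,s,X_0)$.

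Next I would handle the two terms carrying $\theta(\cdot)$. In both, I apply Fubini's theorem to move $\int_0^t(\cdot)\theta(p)\,dp$ to the outside (legitimate since all operators are bounded and the integrations run over the bounded rectangle $[t,T]\times[0,t]$). In the third term, after factoring $B^*$ out, the inner $\sigma$-integral is exactly the representation \eqref{e:P_1_v2} of $P_1(t,p)$, giving $-\int_0^t B^*P_1(t,p)\theta(p)\,dp$. In the fourth term, again recognizing $\lambda(\sigma,t,t)^*$ in the kernel, the inner $\sigma$-integral matches the representation \eqref{e:P_2_v2} of $P_2(t,p,q)$ evaluated at third argument $q=t$, namely $P_2(t,p,t)$, so it equals $-\int_0^t P_2(t,p,t)\theta(p)\,dp$. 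Combining the two gives $-\int_0^t[B^*P_1(t,p)+P_2(t,p,t)]\theta(p)\,dp$, and together with the preceding paragraph this is precisely \eqref{e:feedback_2}.

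The computation is in essence a bookkeeping exercise once Lemma~\ref{l:key} is available; the one genuinely delicate point I would emphasize is the identification of the kernel $\int_t^q B^*e^{A^*(q-p)}k(p-t)^*\,dp$ with the adjoint $\lambda(q,t,t)^*$. It is exactly this recognition, together with the non-obvious representations \eqref{e:P_1^*(t,t)} and \eqref{e:P_2_v2}, that allows $P_1(t,t)^*$ and $P_2(t,p,t)$ to surface from \eqref{e:feedback_1}; the remaining manipulations amount to pulling the constant $B^*$ outside the integrals and a single application of Fubini's theorem.
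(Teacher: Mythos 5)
Your proposal is correct and is exactly the argument the paper intends: the Proposition is stated as an immediate consequence of Lemma~\ref{l:key}, and the identification you carry out -- term~1 of \eqref{e:feedback_1} with $-B^*P_0(t)w_1$ via \eqref{e:P_0_v2}, term~2 with $-P_1(t,t)^*w_1$ via \eqref{e:P_1^*(t,t)} after recognizing the kernel $\int_t^q B^*e^{A^*(q-p)}k(p-t)^*\,dp$ as $\lambda(q,t,t)^*$, and terms~3 and~4 with the $\theta$-integrals of $B^*P_1(t,p)$ and $P_2(t,p,t)$ via \eqref{e:P_1_v2} and \eqref{e:P_2_v2} after Fubini -- is precisely the bookkeeping the authors leave implicit. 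No gap.
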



\section{Proof of statement S5: existence for a corresponding system of three coupled equation}
\label{s:existence}
In this section we show that the triplet $(P_0(t),P_1(t,p),P_2(t,p,q))$ constituted by the operators which are building blocks for the optimal cost \eqref{e:form} and enter
the feedback representation \eqref{e:feedback_2} solves the system of coupled (operator)
equations \eqref{e:DRE}.
We point out here that this fact establishes the {\em existence} for a quadratic (Riccati-type) equation corresponding to the optimal control problem on $[s,T]$, with $Y_s$ as state space, which subsumes the said system \eqref{e:DRE}; see \eqref{e:big-DRE}.
%

A preliminary step which is unavoidable is the computation of certain derivatives of the operators $Z_1(p,\sigma)$ and $Z_2(\sigma,p,t)$, in turn based on differentiability results pertaining to the operators $\psi_1(p,t)$ and $\psi_2(r,p,t)$, respectively.   
This is provided in the following result, whose prove is omitted for the sake of brevity.
%
(We just leave the following tip: in order to establish \eqref{e:D-psi_1}, we go back to the definitions \eqref{e:psi_i}, write down the equation satisfied by the incremental ratio 
$(\psi_1(p,t+h)-\psi_1(p,t))/h$, $h>0$, and eventually take the limit, as $h\to 0$.
Formula \eqref{e:D-psi_2} is attained proceeding similarly, {\em mutatis mutandis}.
Then, the respective formulas \eqref{e:D-Z_1} and \eqref{e:D-Z_2} are readily obtained.)
 

\begin{lemma} \label{l:derivatives}
Let $\psi_1(p,t)$, $\psi_2(r,p,t)$, $Z_1(p,\sigma)$, $Z_2(\sigma,p,t)$ as from \eqref{e:psi_i}
and \eqref{e:Z_i}, respectively.
If $x\in \cD(A)$ and $v\in U$, then the derivatives 
$\partial_t\psi_1(p,t)x$, $\partial_t\psi_2(r,p,t)v$, 
$\partial_t Z_1(p,\sigma)x$, $\partial_t Z_2(\sigma,p,t)v$ exist, with


\begin{align} 
\partial_t Z_1(\sigma,t)x 
&= -e^{(\sigma-t)}Ax +\big[\big(L_t+H_t\big)\partial_t \psi_1(\cdot,t)x\big](\sigma)
\notag\\
& \qquad
- \big[e^{(\sigma-t)A}B+\lambda(\sigma,t,t)\big]\psi_1(t,t)x\,,
\label{e:D-Z_1}
\\[2mm]
\partial_t Z_2(\sigma,p,t)v &= 
-e^{(\sigma-t)A}k(t-p)Bv +\big[\big(L_t+H_t\big)\partial_t \psi_2(\cdot,p,t)v\big](\sigma)
\notag\\
& \qquad- \big[e^{(\sigma-t)A}B+\lambda(\sigma,p,t)\big]\psi_2(\sigma,p,t)v\,,
\label{e:D-Z_2}
\end{align}
where
\begin{align} 
\partial_t \psi_1(p,t)x &= 
\Lambda_t^{-1}\Big[\big(L_t^*+H_t^*\big)C^*C\big[\big(e^{(\cdot-t)A}B+\lambda(\cdot,t,t)\big)\big]
\psi_1(t,t)x
\notag\\
& \qquad + e^{(\cdot-t)A}Ax\Big](p)\,,
\label{e:D-psi_1}
\\[2mm]
\partial_t \psi_2(r,p,t)v &= 
\Lambda_t^{-1}\Big[\big(L_t^*+H_t^*\big)C^*C\big[\big(e^{(\cdot-t)A}B+\lambda(\cdot,t,t)\big)\big]\psi_2(t,p,t)v
\notag \\
& \qquad + e^{(\sigma-t)A}k(t-p)Bv\Big](r)\,.
\label{e:D-psi_2}
\end{align}

\end{lemma}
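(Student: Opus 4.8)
The plan is to differentiate the \emph{implicit} identities rather than the explicit expressions in \eqref{e:psi_i}. Applying $\Lambda_t$ to both sides of \eqref{e:psi_i} gives
\[
\Lambda_t\,\psi_1(\cdot,t) = -\big(L_t^*+H_t^*\big)C^*C\,e^{A(\cdot-t)},\qquad \Lambda_t\,\psi_2(\cdot,p,t) = -\big(L_t^*+H_t^*\big)C^*C\,\lambda(\cdot,p,t),
\]
with $\Lambda_t=I+(L_t^*+H_t^*)C^*C(L_t+H_t)$ as in \eqref{e:maiuscoli}. The advantage is that one then differentiates $\Lambda_t$ -- a composition of concrete integral operators -- rather than its inverse, and recovers $\partial_t\psi_i$ only at the very end by applying $\Lambda_t^{-1}$; the latter is harmless because $\Lambda_t\ge I$ yields the uniform bound $\|\Lambda_t^{-1}\|\le 1$.

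Following the tip, I would then fix $p$, form the incremental ratio $D_h=(\psi_1(\cdot,t+h)-\psi_1(\cdot,t))/h$ for $h>0$, work on the common interval $[t+h,T]$, subtract the two copies of the first identity (at $t+h$ and at $t$), and isolate
\[
\Lambda_{t+h} D_h = -\frac{1}{h}\Big[\big(L_{t+h}^*+H_{t+h}^*\big)C^*C\,e^{A(\cdot-t-h)} - \big(L_t^*+H_t^*\big)C^*C\,e^{A(\cdot-t)}\Big] - \frac{\Lambda_{t+h}-\Lambda_t}{h}\,\psi_1(\cdot,t).
\]
Letting $h\to 0$ and using the strong continuity $t\mapsto\Lambda_t$ together with the uniform bound on $\Lambda_{t+h}^{-1}$, the ratio $D_h$ converges and its limit satisfies $\Lambda_t\,\partial_t\psi_1 = $ the limit of the right-hand side. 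Here the semigroup derivative $\partial_t e^{A(\cdot-t)}x=-e^{A(\cdot-t)}Ax$ is precisely where the hypothesis $x\in\cD(A)$ is needed, and it is this contribution that gives rise to the summand $e^{(\cdot-t)A}Ax$ in \eqref{e:D-psi_1}.

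The decisive computation is the identification of the endpoint contributions produced by $\frac{d}{dt}(L_t+H_t)$. Writing $L_t+H_t$ as a single Volterra operator with kernel $G(\sigma,p):=e^{A(\sigma-p)}B+\lambda(\sigma,p,p)$, one checks that this kernel is \emph{independent} of $t$, so that the whole $t$-dependence of $L_t+H_t$ sits in its lower limit and $\frac{d}{dt}(L_t+H_t)u=-G(\cdot,t)\,u(t)$ is a pure endpoint term. This yields the factor $G(\sigma,t)=e^{A(\sigma-t)}B+\lambda(\sigma,t,t)$ multiplied by the endpoint value $\psi_1(t,t)x$; collecting all contributions gives \eqref{e:D-psi_1}. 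Formula \eqref{e:D-psi_2} follows by the same scheme, starting instead from the second identity and using $\partial_t\lambda(\sigma,p,t)=-e^{A(\sigma-t)}k(t-p)B$ in place of the semigroup derivative. Finally, \eqref{e:D-Z_1}--\eqref{e:D-Z_2} are obtained by differentiating the defining relations \eqref{e:Z_i} directly: the explicit summand contributes $-e^{A(\sigma-t)}Ax$ (resp. $-e^{A(\sigma-t)}k(t-p)Bv$), the composition rule contributes $(L_t+H_t)\partial_t\psi_i$ (now known), and the lower-limit differentiation again contributes the endpoint term, namely $-G(\sigma,t)$ times the value of $\psi_i$ at $t$.

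I expect the genuine obstacle to be exactly the dependence of the domain $L^2(t,T;U)$ on $t$. Because of it, the incremental ratios compare functions living on different intervals, so both the operator difference $\Lambda_{t+h}-\Lambda_t$ and the ratio $D_h$ must be handled through a careful analysis of the thin boundary layer $[t,t+h]$ -- where $\psi_1$ is controlled via its continuity in time and its endpoint value $\psi_1(t,t)$ -- together with the uniform invertibility of $\Lambda_t$. This is the point at which the argument genuinely departs from the memoryless case and from a purely formal, kernel-level differentiation.
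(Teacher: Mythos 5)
Your proposal is correct and follows essentially the same route the paper indicates (the paper omits the proof but its stated tip is precisely your scheme: write the equation satisfied by the incremental ratio of the implicit identity $\Lambda_t\psi_1(\cdot,t)=-(L_t^*+H_t^*)C^*Ce^{A(\cdot-t)}$ and pass to the limit). You also correctly identify the two facts that make the computation work — the $t$-independence of the adjoints $L_t^*,H_t^*$ and of the combined Volterra kernel $e^{A(\sigma-p)}B+\lambda(\sigma,p,p)$, so that differentiating $L_t+H_t$ produces only the endpoint term — and your resulting formulas agree with \eqref{e:D-psi_1}--\eqref{e:D-Z_2} as they are actually used in Section~\ref{s:existence}.
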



\medskip
\noindent
{\em Proof of statement S5.}
{\bf 1.} Let $x,y\in \cD(A)$.
Recall the expression \eqref{e:P_0_v2} of $P_0(t)$ and exploit Lemma~\ref{l:derivatives} to compute  
\begin{align*}
& \frac{d}{dt}\big(P_0(t)x,y\big)_H
=-\big(C^*CZ_1(t,t)x,y\big)_H-\int_t^T \big( C^*CZ_1(\sigma,t)x,e^{(\sigma-t)A}Ay\big)_H\,d\sigma
\\
& \quad +\int_t^T \big(C^*C \partial_t Z_1(\sigma,t)x,e^{(\sigma-t)A}y\big)_H\,d\sigma
\\
&=-\big(C^*Cx,y\big)-\big(P_0(t)x,Ay\big) + \int_t^T \big(e^{A^*(\sigma-t)}C^*C[-e^{(\sigma-t)A}Ax],y\big)_H\,d\sigma
\\
& \quad + \int_t^T \big(e^{(\sigma-t)A^*}C^*C 
\big[\big(L_t+H_t)\Lambda_t^{-1}\big(L_t^*+H_t^*\big)_H
\\
& \myspace C^*C\big[e^{(\cdot-t)A}B+\lambda(\cdot,t,t)\big]\psi_1(t,t)x
+ e^{(\cdot-t)A}Ax\big](\sigma),y\big)_H\,d\sigma
\\
& \quad - \int_t^T \big(e^{(\sigma-t)A^*}C^*C[e^{(\sigma-t)A}B+\lambda(\sigma,t,t)]
\psi_1(t,t)x,y\big)_H\,d\sigma\,.
\end{align*}
We single out the terms which display the elements $-e^{(\sigma-t)A}Ax$, $e^{(\sigma-t)A}Bx$,
$\lambda(\sigma,t,t)$, merge them accordingly to rewrite
\begin{align*}
& \frac{d}{dt}\big(P_0(t)x,y\big)_H 
= -\big(C^*C x,y\big)-\big(P_0(t)x,Ay\big)_H\,
\\
& \qquad -\int_t^T \big(e^{(\sigma-t)A^*}C^*C
\big\{e^{(\sigma-t)A}+\big[\big(L_t+H_t)\psi_1(\cdot,t)\big](\sigma)\big\}Ax,y\big)_H\,d\sigma
\\
&\qquad -\int_t^T \big(e^{(\sigma-t)A^*} C^*C
\big\{e^{(\sigma-t)A}+\big[\big(L_t+H_t)\psi_1(\cdot,t)\big](\sigma)\big\}B\psi_1(t,t)x,y\big)_H\,d\sigma
\\
&\qquad -\int_t^T \big(e^{(\sigma-t)A^*} C^*C
\big\{\lambda(\sigma,t,t)+\big[\big(L_t+H_t)\psi_2(\cdot,t,t)\big](\sigma)\big\}\psi_1(t,t)x,y\big)_H\,d\sigma
\\[1mm]
& = -\big(C^*C x,y\big)_H-\big(P_0(t)x,Ay\big)_H\,d\sigma
\\
& \quad -\int_t^T \big(e^{(\sigma-t)A^*}C^*C Z_1(\sigma,t)Ax,y\big)_H\,d\sigma
\\
& \quad -\int_t^T \big(e^{(\sigma-t)A^*}C^*C Z_1(\sigma,t)B\psi_1(t,t)x,y\big)_H\,d\sigma
\\
& \quad -\int_t^T \big(e^{(\sigma-t)A^*}C^*C Z_2(\sigma,t,t)\psi_1(t,t)x,y\big)_H\,d\sigma\,.
\end{align*}
that is
\begin{equation} \label{e:pre-eq_1}
\begin{split}
&\frac{d}{dt}\big(P_0(t)x,y\big)_H 
= -\big(C^*C x,y\big)_H-\big(P_0(t)x,Ay\big)_H-\big(Ax,P_0(t)y\big)_H
\\
& \myspace\qquad -\big([P_0(t)B+P_1(t,t)]\psi_1(t,t)x,y\big)_H\,.
\end{split}
\end{equation}

We seek to rewrite the term $\big([P_0(t)B+P_1(t,t)]\psi_1(t,t)x,y\big)_H$ 
so as to get rid of $\psi_1(t,t)$.
For convenience we recall here the expressions 
\begin{align*}
P_0(t)&=\int_t^T e^{(\sigma-t)A^*} C^*C \Big[\big[I - (L_t+H_t)\Lambda_t^{-1} (L_t^*+H_t^*)C^*C\big]e^{(\cdot-t)A}\Big](\sigma)\,d\sigma\,,
\\[1mm]
P_1(t,t)&=\int_t^T e^{(\sigma-t)A^*} C^*C \Big[\big[I - (L_t+H_t)\Lambda_t^{-1} (L_t^*+H_t^*)C^*C\big]\lambda(\cdot,t,t)\Big](\sigma)\,d\sigma\,,
\\[1mm]
\psi_1(t,t)&=-\big[\Lambda_t^{-1} (L_t^*+H_t^*)C^*C e^{(\cdot-t)A}\big](t)\,,
\end{align*} 
whose use leads to 
\begin{equation} \label{e:tricky-term}
\begin{split}
& -\big([P_0(t)B+P_1(t,t)]\psi_1(t,t)x,y\big)_H=
\\[1mm]
& \; =\Big(\int_t^T e^{(\sigma-t)A^*} C^*C \Big[\big[I - (L_t+H_t)\Lambda_t^{-1} (L_t^*+H_t^*)C^*C\big]\big[e^{(\cdot-t)A}B+\lambda(\cdot,t,t)\big]\Big](\sigma)\boldsymbol{\cdot}
\\
&\myspace\myspace \boldsymbol{\cdot}\big[\Lambda_t^{-1} (L_t^*+H_t^*)C^*C e^{(\cdot-t)A}x\big](t)\,d\sigma,y\Big)_H\,.
\end{split}
\end{equation}
It is useful to rewrite the term $\Lambda_t^{-1} (L_t^*+H_t^*)C^*C$ as follows:
\begin{equation} \label{e:equality}
\begin{split}
\Lambda_t^{-1} (L_t^*+H_t^*)C^*C
&=\Lambda_t^{-1} (L_t^*+H_t^*)C^*C -(L_t^*+H_t^*)C^*C+(L_t^*+H_t^*)C^*C
\\
& = (I-\Lambda_t) \Lambda_t^{-1} (L_t^*+H_t^*)C^*C+(L_t^*+H_t^*)C^*C
\\
& = - (L_t^*+H_t^*)C^*C(L_t+H_t)\Lambda_t^{-1} (L_t^*+H_t^*)C^*C+(L_t^*+H_t^*)C^*C
\\
& = (L_t^*+H_t^*)C^*C\big[I-(L_t+H_t)\Lambda_t^{-1} (L_t^*+H_t^*)C^*C\big]
\end{split}
\end{equation}
and return to \eqref{e:tricky-term} to find  
\begin{equation*}
\begin{split}
& -\big([P_0(t)B+P_1(t,t)]\psi_1(t,t)x,y\big)_H=
\\[1mm]
& \quad =\Big(\int_t^T e^{(\sigma-t)A^*} C^*C \Big[\big[I - (L_t+H_t)\Lambda_t^{-1} (L_t^*+H_t^*)C^*C\big]\big[e^{(\cdot-t)A}B+\lambda(\cdot,t,t)\big]\Big](\sigma)\boldsymbol{\cdot}
\\
&\myspace \boldsymbol{\cdot}\Big[(L_t^*+H_t^*)C^*C\big[I-(L_t+H_t)\Lambda_t^{-1} (L_t^*+H_t^*)C^*C\big] e^{(\cdot-t)A}x\Big](t)\,d\sigma,y\Big)_H
\\
&\quad=\Big(\int_t^T e^{(\sigma-t)A^*} C^*C \Big[\big[I - (L_t+H_t)\Lambda_t^{-1} (L_t^*+H_t^*)C^*C\big]\big[e^{(\cdot-t)A}B+\lambda(\cdot,t,t)\big]\Big](\sigma)\boldsymbol{\cdot}
\\
& \qquad \boldsymbol{\cdot}\int_t^T \big[B^*e^{(q-t)A^*}+\lambda(q,t,t)^*\big]C^*C
\Big[\big[I-(L_t+H_t)\Lambda_t^{-1} (L_t^*+H_t^*)C^*C\big] e^{(\cdot-t)A}x \Big](q)\,dq\,d\sigma,y\Big)_H\,.
\end{split}
\end{equation*}
Moving the integral (in $\sigma$) to the right we arrive at the more eloquent 
\begin{equation*}
\begin{split}
& -\big([P_0(t)B+P_1(t,t)]\psi_1(t,t)x,y\big)_H=
\\[1mm]
&\; =\Big(\int_t^T \big[B^*e^{(q-t)A^*}+\lambda(q,t,t)^*\big] C^*C 
\Big[\big[I - (L_t+H_t)\Lambda_t^{-1} (L_t^*+H_t^*)C^*C\big]e^{(\cdot-t)A}x\Big](q)dq,
\\
& \quad \big[B^*e^{(\sigma-t)A^*}+\lambda(\sigma,t,t)^*\big]C^*C
\Big[\big[I-(L_t+H_t)\Lambda_t^{-1} (L_t^*+H_t^*)C^*C\big] e^{(\cdot-t)A}y\Big](\sigma)\,d\sigma\Big)_H
\\
&\; =\big([B^*P_0(t)+P_1(t,t)]x,[B^*P_0(t)+P_1(t,t)]y\big)_U\,,
\end{split}
\end{equation*}
which inserted in \eqref{e:pre-eq_1} yields the first equation in \eqref{e:DRE}.

\smallskip


\noindent
{\bf 2.} We move on to the verification of the second equation in the unknown $P_1(t,p)$.
The representation \eqref{e:P_1_v2} of $P_1(t,p)$ is recorded and expanded for the reader's convenience: 
\begin{equation*}
\begin{split}
P_1(t,p)&=\int_t^T e^{(\sigma-t)A^*}C^*C Z_2(\sigma,p,t)\,d\sigma
\\
&= \int_t^T e^{(\sigma-t)A^*}C^*C\Big[\big[I-(L_t+H_t)\Lambda_t^{-1}(L_t^*+H_t^*)C^*C\big]\lambda(\cdot,p,t)\Big](\sigma)\,d\sigma\,,
\end{split}
\end{equation*}
where
\begin{equation*}
Z_2(\sigma,p,t)= \lambda(\sigma,p,t)+\big[\big(L_t+H_t\big)\psi_2(\cdot,p,t)\big](\sigma)\,,
\end{equation*}
with 
\begin{equation*}
\lambda(\sigma,p,t)=\int_t^\sigma e^{(\sigma-q)A} k(q-p)B\,dq\,,
\;\;
\psi_2(r,p,t)=- \big[\Lambda_t^{-1} \big(L_t^*+H_t^*\big)C^*C\lambda(\cdot,p,t)\big](r)\,;
\end{equation*} 
we note that in particular
\begin{equation} \label{e:D-lambda}
\partial_t \lambda(\sigma,p,t)=- e^{(\sigma-t)A} k(t-p)B\,.
\end{equation}
With $v\in U$ and $y\in \cD(A)$, we have
\begin{equation*}
\begin{split}
\partial_t \big(P_1(t,p)v,y\big)_H &=-\big(C^*C \overbrace{Z_2(t,p,t)}^{\equiv 0}v,y\big)_H
- \Big(\int_t^T e^{(\sigma-t)A^*}C^*C Z_2(\sigma,p,t)v\,d\sigma,Ay\Big)_H 
\\
&\quad + \int_t^T e^{(\sigma-t)A^*}C^*C \partial_t Z_2(\sigma,p,t)v\,d\sigma,y\Big)_H\,,
\end{split}
\end{equation*}
which taking into account Lemma~\ref{l:derivatives} yields 
\begin{equation*}
\begin{split}
& \partial_t\big( P_1(t,p)v,y\big)_H
= -(P_1(t,p)v,y)_H- \Big(\int_t^T e^{(\sigma-t)A^*}C^*C e^{(\sigma-t)A}k(t-p)Bv\,d\sigma,y\Big)_H 
\\
&\quad + \Big(\int_t^T e^{(\sigma-t)A^*}C^*C \Big[\big(L_t+H_t\big)\Lambda_t^{-1}\big(L_t^*+H_t^*\big)
C^*C\big[e^{(\cdot-t)A}B+\lambda(\cdot,t,t)\big]\psi_2(t,p,t)v\Big](\sigma)
\,d\sigma,y\Big)_H
\\
&\qquad + \Big[\big(L_t+H_t\big)\Lambda_t^{-1}\big(L_t^*+H_t^*\big)C^*Ce^{(\cdot-t)A}k(t-p)Bv\big]\Big](\sigma)\,d\sigma,y\Big)_H
\\
&\qquad - \Big(\int_t^T e^{(\sigma-t)A^*}C^*C \Big(\big[e^{(\cdot-t)A}B+\lambda(\sigma,t,t)\big]\psi_2(t,p,t)v\Big](\sigma)x\,d\sigma,y\Big)_H\,.
\end{split}
\end{equation*}
By grouping the terms which contain $e^{(\sigma-t)A}k(t-p)B$, $e^{(\cdot-t)A}B$, $\lambda(\sigma,t,t)$,
respectively, we find
\begin{equation*}
\begin{split}
& \partial_t \big(P_1(t,p)v,y\big)_H = -(P_1(t,p)v,Ay)_H
\\
& \qquad - \Big(\int_t^T e^{(\sigma-t)A^*}C^*C 
\Big[\big[e^{(\cdot-t)A}+ \big(L_t+H_t\big)\big]k(t-p)Bv\Big](\sigma) \,d\sigma,y\Big)_H
\\
& \qquad - \Big(\int_t^T e^{(\sigma-t)A^*}C^*C 
\Big[\big[e^{(\cdot-t)A}+ \big(L_t+H_t\big)\big] B\psi_2(t,p,t)v(\sigma)\Big]\,d\sigma,y\Big)_H
\\
& \qquad - \Big(\int_t^T e^{(\sigma-t)A^*}C^*C 
\Big[\big[\lambda(\cdot,t,t)+\big(L_t+H_t\big)\psi_2(\cdot,t,t)\big] \psi_2(t,p,t)v(\sigma)\Big]\,d\sigma,y\Big)_H
\\
& \myspace = -(P_1(t,p)x,Ay)_H 
- \Big(\int_t^T e^{(\sigma-t)A^*}C^*C Z_1(\sigma,t)k(t-p)Bv\,d\sigma,y\Big)_H
\\
& \qquad - \Big(\int_t^T e^{(\sigma-t)A^*}C^*C Z_1(\sigma,t) B\psi_2(t,p,t)v\,d\sigma,y\Big)_H
\\
& \qquad - \Big(\int_t^T e^{(\sigma-t)A^*}C^*C  Z_2(\sigma,t,t)\psi_2(t,p,t)v\,d\sigma,y\Big)_H\,,
\end{split}
\end{equation*}
that is
\begin{equation} \label{e:pre-eq_2}
\begin{split}
\partial_t \big(P_1(t,p)v,y\big)_H &= -\big(P_1(t,p)v,Ay\big)_H-\big(P_0(t)kt-p)Bv,y\big)_H
\\[1mm]
& \qquad - \big([P_0(t)B+P_1(t,t)]\psi_2(t,p,t)v,y\big)_H\,.
\end{split}
\end{equation}

The analysis on the scalar product $\big([P_0(t)B+P_1(t,t)]\psi_2(t,p,t)v,y\big)_H$ is pretty much
akin to the one carried out in the previous step:
%
with the function $\psi_1(t,t)$ now replaced by 
\begin{equation*}
\psi_2(t,p,t)=-\big[\Lambda_t^{-1} (L_t^*+H_t^*)C^*C \lambda(\cdot,p,t)\big](t)\,,
\end{equation*}
we first find
\begin{equation*}
\begin{split} 
&-\big([P_0(t)B+P_1(t,t)]\psi_2(t,p,t)v,y\big)_H =
\\
& \;\Big(\int_t^T e^{(\sigma-t)A^*}C^*C  
\Big[\big[I- \big(L_t+H_t\big)\Lambda_t^{-1}\big(L_t^*+H_t^*\big)C^*C\big]\,
\big[e^{(\cdot-t)A}B+\lambda(\cdot,t,t)\big]\Big](\sigma) \boldmath{\cdot}
\\
& \myspace \boldmath{\cdot}\big[\Lambda_t^{-1} (L_t^*+H_t^*)C^*C \lambda(\cdot,p,t)v\big](t)\,d\sigma,y\Big)_H\,.
\end{split}
\end{equation*}
Next, by recalling the equality \eqref{e:equality} we see that
\begin{equation*}
\begin{split} 
&-\big([P_0(t)B+P_1(t,t)]\psi_2(t,p,t)v,y\big)_H =
\\
& \; =\Big(\int_t^T e^{(\sigma-t)A^*}C^*C  
\Big[\big[I- \big(L_t+H_t\big)\Lambda_t^{-1}\big(L_t^*+H_t^*\big)C^*C\big]\,
\big[e^{(\cdot-t)A}B+\lambda(\cdot,p,t)\big]\Big](\sigma)\boldmath{\cdot}
\\
& \myspace \boldmath{\cdot} \Big[(L_t^*+H_t^*)C^*C\big[I-(L_t+H_t)\Lambda_t^{-1} (L_t^*+H_t^*)C^*C\big]\lambda(\cdot,p,t)v\Big](t) \,d\sigma,y\Big)_H
\\
& \; =\Big(\int_t^T e^{(\sigma-t)A^*}C^*C  
\Big[\big[I- \big(L_t+H_t\big)\Lambda_t^{-1}\big(L_t^*+H_t^*\big)C^*C\big]\,
\big[e^{(\cdot-t)A}B+\lambda(\cdot,p,t)\big]\Big](\sigma)\boldmath{\cdot}
\\
& \qquad \boldmath{\cdot} \int_t^T\big[B^*e^{(q-t)A^*}+\lambda(q,t,t)^*\big]C^*C
\big[I-(L_t+H_t)\Lambda_t^{-1} (L_t^*+H_t^*)C^*C\big]\lambda(\cdot,p,t)v\Big](q)\,dq\,d\sigma,y\Big)_H\,.
\end{split}
\end{equation*}
A reworking of the latter scalar product leads to the conclusion
\begin{equation*}
\begin{split} 
&-\big([P_0(t)B+P_1(t,t)]\psi_2(t,p,t)v,y\big)_H =
\\
& \; = \Big(\int_t^T\big[B^*e^{(q-t)A^*}+\lambda(q,t,t)^*\big]C^*C
\big[I-(L_t+H_t)\Lambda_t^{-1} (L_t^*+H_t^*)C^*C\big]\lambda(\cdot,p,t)v\Big](q)\,dq,
\\
& \qquad \int_t^T\Big[\big[B^*e^{(\cdot-t)A^*}+\lambda(\cdot,p,t)^*\big]C^*C
\big[I-(L_t+H_t)\Lambda_t^{-1} (L_t^*+H_t^*)C^*C\big]e^{(\cdot-t)A}y\Big](\sigma)\,d\sigma\Big)_U
\\
& \; =\big([B^*P_1(t,p)+P_2(t,p,t)]v,[B^*P_0(t)+P_1(t,t)]y\big)_U\,,
\end{split}
\end{equation*} 
which combined with \eqref{e:pre-eq_2} validates the second equation of the system \eqref{e:DRE}.
 
\smallskip


\noindent 
{\bf 3.}
In order to prove that the third equation in the unknown $P_2(t,p,q)$ (within the coupled system
\eqref{e:DRE}) is valid, as first thing we need to recall the representation \eqref{e:P_2_v2} of $P_2(t,p,q)$, as well as the derivatives $\partial_t \lambda(\sigma,q,t)^*$, $\partial_t Z_2(\sigma,p,t)$ and $\partial_t \psi_2(r,p,t)$ as from \eqref{e:D-lambda}, \eqref{e:D-Z_2}, \eqref{e:D-psi_2}, respectively.
We have for $u,v\in U$
\begin{equation*}
\begin{split}
& \partial_t \big(P_2(t,p,q)u,v\big)_U
=-\big(\lambda(t,q,p)^*C^*C \overbrace{Z_2(t,p,t)}^{\equiv 0}u,v\big)_U
\\
& \myspace 
- \Big(\int_t^T e^{(\sigma-t)A^*}C^*C K(t-q)^*C^*C Z_2(\sigma,p,t)u\,d\sigma ,v\Big)_U  
\\
&\qquad 
- \Big(\int_t^T \lambda(\sigma,q,t)^*C^*C e^{(\sigma-t)}k(t-p)Bu\,d\sigma,v\Big)_U\,,
\\
&\qquad 
+ \Big(\int_t^T \lambda(\sigma,q,t)^* C^*C \Big[\big(L_t+H_t\big)\Lambda_t^{-1}\big(L_t^*+H_t^*\big)C^*C \boldmath{\cdot}
\\
& \myspace
\boldmath{\cdot}\Big[\big[e^{(\cdot-t)A}B+\lambda(\cdot,t,t)\big]\psi_2(t,p,t)
+e^{(\cdot-t)A}k(t-p)Bu\big]\Big](\sigma)\,d\sigma,v\Big)_U
\\
&\qquad - \Big(\int_t^T \lambda(\sigma,q,t)^*  C^*C \big[e^{(\sigma-t)A}B+\lambda(\sigma,t,t)\big]\psi_2(t,p,t)u\, d\sigma ,v\Big)_U\,,
\end{split}
\end{equation*}
which -- after a rearrangement of terms -- becomes
\begin{equation*}
\begin{split}
& \partial_t \big(P_2(t,p,q)u,v\big)_U = -(B^*k(t-p)^*P_1(t,p)u,v)_U
\\
& \; -\Big(\int_t^T \lambda(\sigma,q,t)^* C^*C\Big[\big[I-\big(L_t+H_t\big)\Lambda_t^{-1}\big(L_t^*+H_t^*\big)C^*C\big] e^{(\cdot-t)A}\Big](\sigma)\,d\sigma \,k(t-p)Bu,v\Big)_U
\\
& \; - \Big(\int_t^T \lambda(\sigma,q,t)^* C^*C\Big[\big[I-\big(L_t+H_t\big)\Lambda_t^{-1}\big(L_t^*+H_t^*\big)C^*C\big] e^{(\cdot-t)A}B\Big](\sigma)\,d\sigma \,\psi_2(t,p,t)x,y\Big)
\\
& \; - \Big(\int_t^T \lambda(\sigma,q,t)^* C^*C\Big[\big[I-\big(L_t+H_t\big)\Lambda_t^{-1}\big(L_t^*+H_t^*\big)C^*C\big] \lambda(\cdot,t,t)\Big](\sigma)\,d\sigma\, \psi_2(t,p,t)x,y\Big)
\\
& \myspace\quad=: \sum_{i=0}^3 T_i\,,
\end{split}
\end{equation*}
where the summands $T_i$, $i\in \{1,2,3\}$, can be rewritten -- taking into account the expression \eqref{e:P_1*} for $P_1(t,q)^*$ -- as follows, respectively: 
\begin{equation*}
\begin{cases}
T_1= -\big(P_1(t,q)^*k(t-p)Bu,v\big)_U
\\[1mm]
T_2= -\big(P_1(t,q)^*B\psi_2(t,p,t)u,v\big)_U
\\[2mm]
T_3= -\Big(\displaystyle \int_t^T \lambda(\sigma,q,t)^* C^*C Z_2(\sigma,t,t)\,d\sigma \psi_2(t,p,t)u,v\Big)_U
= -\big(P_2(t,t,q)\psi_2(t,p,t)u,v\big)_U\,.
\end{cases}
\end{equation*}
So far, we attained 
\begin{equation} \label{e:pre-eq_3}
\begin{split}
\partial_t \big(P_2(t,p,q)u,v\big)_U &= -\big(B^*k(t-p)^* P_1(t,p)u,v\big)_U
-\big(P_1(t,q)^*k(t-p)Bu,v\big)_U
\\[1mm]
& \qquad - \big([P_1(t,q)^*B+P_2(t,t,q)]\psi_2(t,p,t)u,v\big)_U\,.
\end{split}
\end{equation}
Once again, in order to disclose that the equation \eqref{e:pre-eq_3} is actually the soughtafter one
comprised in \eqref{e:DRE}, it remains to pinpoint the term $\big([P_1(t,q)^*B+P_2(t,t,q)]\psi_2(t,p,t)u,v\big)_U$.
We have
\begin{equation*}
\begin{split} 
&-\big([P_1(t,q)^*B+P_2(t,t,q)]\psi_2(t,p,t)u,v\big)_U=
\\
& \; =\Big(\int_t^T \lambda(\sigma,q,t)^* C^*C  
\Big[\big[I- \big(L_t+H_t\big)\Lambda_t^{-1}\big(L_t^*+H_t^*\big)C^*C\big]\,
\big[e^{(\cdot-t)A}B+\lambda(\cdot,t,t)\big]\,\boldmath{\cdot}
\\
& \myspace
\boldmath{\cdot} \big[\Lambda_t^{-1}\big(L_t^*+H_t^*\big)C^*C\lambda(\cdot,p,t)u\big](t)\Big](\sigma)\,d\sigma ,v\Big)_U
\\
& \; =\Big(\int_t^T \lambda(\sigma,q,t)^* C^*C  
\Big[\big[I- \big(L_t+H_t\big)\Lambda_t^{-1}\big(L_t^*+H_t^*\big)C^*C\big]
\big[e^{(\cdot-t)A}B+\lambda(\cdot,t,t)\big]\Big]\boldmath{\cdot}
\\
& \myspace \boldmath{\cdot} \big(L_t^*+H_t^*\big)C^*C\Big[\big[I- \big(L_t+H_t\big)\Lambda_t^{-1}\big(L_t^*+H_t^*\big)C^*C\big]\,\lambda(\cdot,p,t)u\Big](t)\,d\sigma,v\Big)_U
\\
& \; =\Big(\int_t^T \lambda(\sigma,q,t)^* C^*C  
\Big[\big[I- \big(L_t+H_t\big)\Lambda_t^{-1}\big(L_t^*+H_t^*\big)C^*C\big]
\big[e^{(\cdot-t)A}B+\lambda(\cdot,t,t)\big]\Big]\boldmath{\cdot}
\\
& \qquad\qquad \boldmath{\cdot} \big[B^*e^{(r-t)A^*}+\lambda(r,t,t)^*\big]
C^*C\Big[\big[I-\big(L_t+H_t\big)\Lambda_t^{-1}\big(L_t^*+H_t^*\big)C^*C\big]\,
\lambda(\cdot,p,t)u\Big](r)\,dr\,d\sigma,v\Big)_V\,,
\end{split}
\end{equation*}
which gives 
\begin{equation*}
\begin{split} 
&- \big([P_1(t,q)^*B+P_2(t,t,q)]\psi_2(t,p,t)u,v\big)_U=
\\
& \quad =\Big(\int_t^T \big[B^*e^{(r-t)A^*}+\lambda(r,t,t)^*\big]
C^*C\Big[\big[I- \big(L_t+H_t\big)\Lambda_t^{-1}\big(L_t^*+H_t^*\big)C^*C\big]\,\lambda(\cdot,p,t)u\Big](r)\,dr,
\\
& \qquad\qquad 
\int_t^T \big[B^*e^{(\sigma-t)A^*}+\lambda(\sigma,t,t)^*\big]C^*C\Big[\big[I-\big(L_t+H_t\big)\Lambda_t^{-1}\big(L_t^*+H_t^*\big)C^*C\big]\,\lambda(\cdot,q,t)y\Big](\sigma)\,d\sigma \Big)_U
\\
& \quad = \big([B^*P_1(t,p)+P_2(t,p,t)]u,[B^*P_1(t,q)+P_2(t,q,t)]v\big)_U
\end{split}
\end{equation*}
for all $x,y\in Y$.
The third equation of the system \eqref{e:DRE} is thus established combining the latter result
with \eqref{e:pre-eq_3}. 

\qed


\section{Proof of statement S6: uniqueness for the Riccati-type equation}
\label{s:uniqueness}
In the previous section we showed that the triplet $(P_0(t),P_1(t,p),P_2(t,p,q)$) -- with $P_i$ defined
by \eqref{e:riccati-ops_2}, $i\in \{0,1,2\}$ -- is a solution to the coupled system \eqref{e:DRE}. 
However, in order to initiate the synthesis of the optimal control, we need to demonstrate that the 
coupled system \eqref{e:DRE} is actually {\em uniquely} solvable.
This may be accomplished more readily by relating system \eqref{e:DRE} to a quadratic equation
in the space $Y_t=H\times L^2(0,t;U)$.

\begin{theorem} 
With reference to the optimal control problem \eqref{e:mild-sln_s}-\eqref{e:cost_s} under the Assumptions~\ref{a:ipo_0} and the hypothesis \eqref{e:ipo_1}, let $P_0(t)$, $P_1(t,\cdot)$ and $P_2(t,\cdot,:)$ as from the statement S3. of Theorem~\ref{t:main}.
Then, the (matrix) operator $P(t)$ defined by\footnote{The symbols $\cdot$ and $:$ stand for hidden variables.} 
\begin{equation} \label{e:big-op}
P(t):=
\begin{pmatrix}
P_0(t) & P_1(t,\cdot)
\\
P_1(t,:)^* & P_2(t,\cdot,:)
\end{pmatrix}
\end{equation}
is the unique solution of the following quadratic equation:
\begin{equation} \label{e:big-DRE}
\hspace{7mm}
\frac{d}{dt} P(t)=-Q -P(t)[\cA+\cK_1(t)] -[\cA^*+\cK_2(t)]P(t)
+ P(t)\cI_{1,t}\cB\cI_{2,t}P(t)\,,
\end{equation}
supplemented with the final condition $P(T)=0$.
The coefficients read as  
\begin{equation*}
\begin{split}
&\cQ:=
\begin{pmatrix} 
C^*C & 0
\\
0 & 0
\end{pmatrix},
\qquad
\cA:=
\begin{pmatrix} 
A & 0
\\
0 & 0
\end{pmatrix},
\qquad
\cB:=
\begin{pmatrix} 
BB^* & B
\\
B & I
\end{pmatrix}
\\[1mm]
& \cK_1(t):=
\begin{pmatrix} 
0 & k(t-\cdot)
\\
0 & 0
\end{pmatrix},
\qquad
\cK_2(t):=
\begin{pmatrix} 
0 & 0
\\
k(t-\!:)^* & 0
\end{pmatrix}, 
\\[1mm]
& \cI_{1,t}:=
\begin{pmatrix} 
I & 0
\\
0 & \chi_{\{t\}}(\cdot)
\end{pmatrix}, 
\qquad
\cI_{2,t}:=
\begin{pmatrix} 
I & 0
\\
0 & \chi_{\{t\}}(:)
\end{pmatrix}
\end{split}
\end{equation*}
($\chi_{\{t\}}$ denotes the characteristic function of a set, here specifically the singleton 
$\{t\}$).
\end{theorem}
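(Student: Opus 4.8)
The plan is to prove the two assertions separately. For existence I would expand \eqref{e:big-DRE} blockwise. Writing, for $X=(x,\phi)\in Y_t$,
\[
P(t)X=\Big(P_0(t)x+\int_0^t P_1(t,p)\phi(p)\,dp,\ P_1(t,\cdot)^*x+\int_0^t P_2(t,\cdot,p)\phi(p)\,dp\Big),
\]
and inserting the block forms of $\cQ,\cA,\cB,\cK_1(t),\cK_2(t),\cI_{1,t},\cI_{2,t}$, the $(1,1)$, $(1,2)$ and $(2,2)$ entries of \eqref{e:big-DRE} must reproduce, in this order, the three scalar equations of \eqref{e:DRE}. The only point needing care is the quadratic term $P(t)\cI_{1,t}\cB\cI_{2,t}P(t)$: the factors $\cI_{1,t}$, $\cI_{2,t}$ insert — through $\chi_{\{t\}}$ — the endpoint evaluations that turn the kernels into $P_1(t,t)$, $P_1(t,t)^*$ and $P_2(t,p,t)$, $P_2(t,t,q)$, which are precisely the operators occurring in the feedback terms of \eqref{e:DRE}. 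Once this correspondence is recorded, existence is immediate from statement S5.\ (already established) together with the final conditions \eqref{e:final}, which read $P(T)=0$ in block form.

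For uniqueness I would not work on the operator equation directly; instead I would show that \emph{any} admissible solution coincides with the optimal-cost operator, so that uniqueness is inherited from the uniqueness of the minimiser of $J_s$ (statement S1.). Let $\tilde P(t)$ be self-adjoint, non-negative, bounded, solving \eqref{e:big-DRE} with $\tilde P(T)=0$, and denote by $\tilde P_0,\tilde P_1,\tilde P_2$ its blocks. Fix $s\in[0,T)$ and $X_0\in Y_s$; for an arbitrary control $u\in L^2(s,T;U)$ let $X(t)=(w(t),v(\cdot))\in Y_t$ be the associated augmented trajectory, with $v$ equal to $\eta$ on $[0,s)$ and to $u$ on $[s,t]$. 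The crux is to differentiate $t\mapsto(\tilde P(t)X(t),X(t))_{Y_t}$, substitute \eqref{e:big-DRE} for $\tfrac{d}{dt}\tilde P$ and the dynamics of $X$, and obtain the completion-of-squares identity
\[
\frac{d}{dt}\big(\tilde P(t)X(t),X(t)\big)_{Y_t}=-\|Cw(t)\|_H^2-\|u(t)\|_U^2+\big\|u(t)-\hat u_{\tilde P}(t)\big\|_U^2,
\]
where $\hat u_{\tilde P}(t)=-[B^*\tilde P_0(t)+\tilde P_1(t,t)^*]w(t)-\int_0^t[B^*\tilde P_1(t,p)+\tilde P_2(t,p,t)]v(p)\,dp$ is the feedback built from $\tilde P$, in the form dictated by statement S4.

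Integrating over $[s,T]$ and using $\tilde P(T)=0$ then gives, for every admissible $u$,
\[
J_s(u)=\big(\tilde P(s)X_0,X_0\big)_{Y_s}+\int_s^T\big\|u(t)-\hat u_{\tilde P}(t)\big\|_U^2\,dt,
\]
whence $(\tilde P(s)X_0,X_0)_{Y_s}\le J_s(u)$, with equality along the feedback control. Thus $(\tilde P(s)X_0,X_0)_{Y_s}=\min_u J_s(u)=J_s(\hat u)$, which by statement S3.\ equals $(P(s)X_0,X_0)_{Y_s}$; since $X_0$ is arbitrary and both operators are self-adjoint, polarization forces $\tilde P(s)=P(s)$ for all $s$, i.e.\ uniqueness.

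The genuine obstacle is the differentiation of $t\mapsto(\tilde P(t)X(t),X(t))_{Y_t}$, for the reason flagged in the introduction: the inner product lives on the \emph{moving} space $Y_t=H\times L^2(0,t;U)$. Beyond the contributions of $\tfrac{d}{dt}\tilde P$ and of $\dot X$, differentiating the integral $\int_0^t$ in the second component produces a boundary term evaluated at the upper limit, and it is this term that must be matched against the $\chi_{\{t\}}$-factors in $\cI_{1,t},\cI_{2,t}$ and the coupling entries $\cK_1,\cK_2$ for the squares to close. Closely related, one must guarantee that the closed loop driven by $\hat u_{\tilde P}$ is well posed, so that the equality case above is genuinely attained. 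Making both points rigorous — the time-differentiability of the trajectory within the family $\{Y_t\}$ and the handling of the endpoint contribution — is where I would lean on the technique developed in \cite[Section 6]{ac-bu-memory1_2024}.
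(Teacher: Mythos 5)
Your existence step matches the paper's: both reduce \eqref{e:big-DRE} blockwise to the three scalar equations \eqref{e:DRE} already established in S5, with the $\chi_{\{t\}}$-factors in $\cI_{1,t},\cI_{2,t}$ accounting for the endpoint evaluations $P_1(t,t)$, $P_2(t,p,t)$; the paper additionally flags (deferring to \cite[Proposition~6.1]{ac-bu-memory1_2024}) that passing from the pointwise identities of \eqref{e:DRE} to the operator identity on $Y_t$ — i.e.\ from $\frac{\partial}{\partial t}(P_1(t,p)v,y)_H$ to $\frac{\partial}{\partial t}(P_1(t,\cdot)f(\cdot),y)_H$ with $f\in L^2(0,t;U)$ — is itself a nontrivial point, which you should not gloss over.

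For uniqueness, however, you take a genuinely different route. The paper does \emph{not} go back to the control problem: it sets $V=P-Q$ for a second bounded solution $Q$, writes the linear equation satisfied by $V$ in mild form \eqref{e:mildV} using the semigroup $e^{s\cA}$, establishes that $r\mapsto\|V(r)\|_{\cL(Y_t)}$ is lower semicontinuous (hence measurable) on the moving scale of spaces, and concludes $V\equiv 0$ by Gronwall. This is leaner: it needs only boundedness of the competing solution (not non-negativity), no chain rule along trajectories, and no closed-loop well-posedness. Your verification/completion-of-squares argument is the classical alternative and, if carried out, buys more — the identification of any admissible solution $\tilde P(s)$ with the value function via \eqref{e:form} and the feedback \eqref{e:feedback_2} — but it concentrates all the difficulty precisely in the two steps you yourself flag and do not carry out: (i) justifying $\frac{d}{dt}(\tilde P(t)X(t),X(t))_{Y_t}$ when $X(t)$ lives in the moving space $Y_t$, $w(t)$ is only a mild solution (whereas \eqref{e:DRE} is stated for $x,y\in\cD(A)$), and the upper limits of the $\int_0^t$ integrals generate boundary terms that must cancel against the $\chi_{\{t\}}$ insertions; and (ii) well-posedness of the closed loop driven by $\hat u_{\tilde P}$, without which you only obtain $(\tilde P(s)X_0,X_0)_{Y_s}\le\min_u J_s(u)$ and the identification does not close. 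Both points are resolvable (approximation of $A$, a Picard iteration for the feedback Volterra system), but as written they are asserted rather than proved, so your proof is an outline of a viable alternative rather than a complete argument; the paper's Gronwall route avoids both obstacles at the price of the single measurability lemma.
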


\begin{proof}
We retrace the major steps of the proof of \cite[Theorem~1.6]{ac-bu-memory1_2024}.
{\bf 1.} To confirm the natural claim that the matrix operator $P(t)$ defined by \eqref{e:big-op} is a
solution to the equation \eqref{e:big-DRE}, a subtle technical issue must be addressed beforehand,
see \cite[Proposition~6.1]{ac-bu-memory1_2024}. 
(Indeed, one thing is to compute e.g. the derivative $\frac{\partial}{\partial t} \big(P_1(t,p)v,y\big)_H$ for $v\in U$, $y\in \cD(A)$; 
computing $\frac{\partial}{\partial t} \big(P_1(t,\cdot)f(\cdot),y\big)$, with $f\in L^2(0,t;U)$, $y\in \cD(A)$, another.)

\smallskip
\noindent
{\bf 2.} 
The proof of uniqueness is similar as well to the earlier one in \cite[Proposition~6.1]{ac-bu-memory1_2024};
we detail the computations which are specific to the problem at hand, after recalling some introductory elements.

%
The operator $P(\tau)$ defined by \eqref{e:big-op} belongs to the space $\cL(\cH_\tau)$ for
any $\tau\in (0,T]$, and hence $P(\cdot)$ belongs to the Banach space
\begin{equation*}
Z:=\big\{Q(\cdot)\colon Q(\tau)\in \cL(Y_\tau), \; \tau\in (0,T]\big\}, 
\quad \|Q\|_Z= \sup_{\tau\in (0,T]} \|Q(\tau)\|_{\cL(Y_\tau)}<\infty\,.
\end{equation*}
By construction, $Y_t\subseteq Y_\tau$ for $\tau<t$.
 
We proceed by contradiction: let $Q(\cdot)\in Z$ be another solution to \eqref{e:big-DRE}
(besides $P(\tau)$ defined by \eqref{e:big-op}), and set $V:=P-Q$. 
Of course, we have as well
\begin{equation*}
Q(\tau)=
\begin{pmatrix}
Q_0(\tau) & Q_1(\tau,\cdot)
\\
Q_1(\tau,:)^* & Q_2(\tau,\cdot,:)
\end{pmatrix}, 
\qquad
V(\tau)=
\begin{pmatrix}
V_0(\tau) & V_1(\tau,\cdot)
\\
V_1(\tau,:)^* & V_2(\tau,\cdot,:)
\end{pmatrix}, 
\quad \tau\in (0,T).
\end{equation*}
For given $\tau, t\in (0,T)$, with $\tau<t$, the difference operator $V(\cdot)$ satisfies in $[t,T)$ the differential equation
\begin{equation}\label{e:DRE_uni} 
\begin{split}
& \frac{d}{dr} V(r)=[\cA^*+\cK_2(r)]V(r) -V(r)[\cA+\cK_1(r)] 
\\
& \myspace \myspace
+ V(r)\cI_{1,r}\cB \cI_{2,r}P(r) - Q(r)\cI_{1,r}\cB \cI_{2,r}V(r)=0\,,
\end{split}
\end{equation}
with $V(T)=P(T)-U(T)=0$.
We remark that the equation \eqref{e:DRE_uni} is to be interpreted as follows: the first term in its left hand side is 
\begin{equation*} 
\frac{d}{dr} \big(V(r)X_0,X_1\big)_{Y_t}
\end{equation*} 
(with $X_0, X_1\in Y_t$); the other terms are understood in a similar way.

We note that the operator $\cA$ is the infinitesimal generator of a $C_0$-semigroup 
$\{e^{s\cA}\}_{s\ge 0}$ in $Y_\tau$ (for any $\tau\in (0,T)$), explicity given by
\begin{equation*}
e^{s\cA} = 
\begin{pmatrix}
e^{sA} & 0
\\
0 & I
\end{pmatrix};
\end{equation*}
the bound $\|e^{s\cA}\|_{\cL(Y_\tau)} \le C_T$ holds true for any $s\in [0,T]$ and $\tau\in (0,T)$.

Thus, the mild form of \eqref{e:DRE_uni} in $[t,T]$ is
\begin{equation} \label{e:mildV} 
\begin{split}
& V(r) = \int_r^T e^{(p-r)\cA^*}\Big[V(p)\cK_1(p)+\cK_2(p)V(p)
\\
& \myspace \myspace
-V(p)I_{1,p}\cB I_{2,p}P(p) - Q(p)I_{1,p}\cB I_{2,p}V(p)\Big]e^{(p-r)\cA}\,dp
\end{split}
\end{equation}
(this mild form is understood as \eqref{e:DRE_uni} did).
We can now observe that it makes sense to estimate $\|V(r)\|_{\cL(Y_t)}$ when $r\ge t$.    
In addition, 
it can be shown that for $r\ge t$ the map $r \longmapsto \|V(r)\|_{\cL(Y_t)}$ is lower semi-continuous and hence it is a measurable function in $[t,T]$ (see Lemma~\ref{l:lsc} at the end of this section, whose proof is omitted).

Starting from \eqref{e:mildV}, we find 
\begin{equation*}
\begin{split} 
\|V(r)\|_{\cL(Y_t)} 
&\le \int_r^T  C_T^2 \Big[2 \|V(p)\|_{\cL(Y_t)}\|k\|_{L^2(0,T)} 
+ \|V(p)\|_{\cL(Y_t)} \|\cB\|\,\|P(p)\|_{\cL(Y_t)} 
\\
& \qquad  \|Q(p)\|_{\cL(Y_t)} \|\cB\|\,\|V(p)\|_{\cL(Y_t)}\Big]\,dp
\\
& \le C_1 \int_r^T  \|V(p)\|_{\cL(Y_t)}\,dp\,, 
\end{split}
\end{equation*}
where $C_1$ is a suitable positive constant depending on $T$, $1+\|B\|^2$, $\|k\|_{L^2(0,T)}$, 
$\|P\|_Z$, $\|Q\|_Z$. 

By the Gronwall Lemma it follows that $\|V(r)\|_{\cL(Y_t)} =0$ for $r\in[t,T]$; 
in particular, $\|V(t)\|_{\cL(Y_t)}=0$. 
Since $t>\tau$ was given and yet arbitrary, it follows that 
\begin{equation*}
V_0(t)=0, \quad V_1(t,s)=0, \quad V_2(t,s,q)=0 \quad \forall s,q\in [0,\tau], 
\; \forall t\in [\tau,T]\,.
\end{equation*}
This means that the operators $Q$ and $P$ coincide for any $t,s,q$ such that $s,q \in [0,\tau]$ and $t\in [\tau,T]$ (with arbitrary $\tau\in (0,T$)).

Thus, this second part of the proof is complete once the following result is established; {\em cf.}
\cite[Lemma~6.2]{ac-bu-memory1_2024}:


\begin{lemma} \label{l:lsc} 
For any $\tau\in (0,T)$, the map $t \longmapsto \|V(t)\|_{\cL(Y_\tau)}$ is lower semi-continuous in
$[\tau,T]$.
\end{lemma}

\end{proof}

\medskip


{\small
\section*{Acknowledgements}
The research of the second author has been performed in the framework of the MIUR-PRIN Grant 2020F3NCPX ``Mathematics for Industry 4.0 (Math4I4)''.
She is currently supported by the Universit\`a degli Studi di Firenze under the 2024 Project {\em ``Controllo lineare-quadratico per equazioni integro-differenziali a 
derivate parziali''}, of which she is responsible.
She is also a member of the Gruppo Nazionale per l'Analisi Mate\-ma\-tica, la Probabilit\`a  e le loro Applicazioni of the Istituto Nazionale di Alta Matematica (GNAMPA-INdAM) and participant to
the 2024 GNAMPA Project ``Controllo ottimo infinito dimensionale: aspetti teorici ed applicazioni''.
}


\appendix


\section*{Appendix. Proof of statement S2: transition properties} 
In this Appendix we prove the statement S2. of Thorem~\ref{t:main}, namely, that both the optimal control and the component $w(\cdot)$ of the optimal state -- that comprises the value of the evolution at time $s$ and the memory of the control up to time $s$ -- satisfy the soughtafter (respective) transition properties. 

We prove a transition property for the optimal control as first.
\\
{\bf 1.}
One needs to recall that
\begin{equation*} 
\hat{u}(\cdot,s,X_0)= \underset{u\in L^2(s,T;U)}{\arg\min} J_s(u)
\end{equation*}
where $X_0=(w_0,\eta)^T$, with $\eta(\cdot):=\hat{u}(\cdot,0,w_0)\big|_{[0,s]}$.
The starting point is the formula \eqref{e:start-for-transition} brought about by the optimality condition, that reads as 
\begin{equation*}
\begin{split} 
&\hat{u}(t,s,X_0)+\big[\big(L_s^*+H_s^*\big)C^*C\big(L_s+H_s\big)\hat{u}(\cdot,s,X_0)\big](t)
\\
& \myspace = -\big[\big(L_s^*+H_s^*\big)C^*C\big[e^{A(\cdot-s)}w_0+\cK_s\eta(\cdot)\big](t)\,,
\qquad \text{a.e. in $[s,T]$.}
\end{split}
\end{equation*}
%
At a subsequent time $\tau$, namely, if $0<s<\tau<T$, it holds 
\begin{equation*}
\begin{split} 
& \hat{u}(t,\tau,X_1)+\big[\big(L_\tau^*+H_\tau^*\big)C^*C\big(L_\tau+H_\tau\big)\hat{u}(\cdot,\tau,X_1)\big](t)
\\
& \myspace = -\big[\big(L_\tau^*+H_\tau^*\big)C^*C\big[e^{A(\cdot-\tau)}w_1
+\cK_s\theta(\cdot)\big](t)\,,
\end{split}
\end{equation*}
where we set 
\begin{equation*}
w_1=\hat{w}(\tau,s,w_0)\,, \quad  \theta(\cdot):=\hat{u}(\cdot,0,w_0)\big|_{[0,\tau]}\,.
\end{equation*}

Subtracting the latter equality from the preceding one yields, for any $t\in [\tau,T]$, 
\begin{equation*} 
\begin{split} 
&\hat{u}(t,s,X_0)-\hat{u}(t,\tau,X_1)
\\
& \, =-\big[\big(L_s^*+H_s^*\big)C^*C\big(L_s+H_s\big)\hat{u}(\cdot,s,X_0)
-\big(L_\tau^*+H_\tau^*\big)C^*C\big(L_\tau+H_\tau\big)\hat{u}(\cdot,\tau,X_1)\big](t)
\\
& \quad -\Big[\big(L_s^*+H_s^*\big)C^*C\big[e^{A(\cdot-s)}w_0+\cK_s\eta(\cdot)\big]
- \big(L_\tau^*+H_\tau^*\big)C^*C\big[e^{A(\cdot-\tau)}w_1+\cK_\tau \theta(\cdot)\big]\Big](t)\,.
\\
& 
\, =-\Big[\big(L_\tau^*+H_\tau^*\big)C^*C\big[\big(L_s+H_s\big)\hat{u}(\cdot,s,X_0)
-\big(L_\tau+H_\tau\big)\hat{u}(\cdot,\tau,X_1)\big]\Big](t)
\\
& 
\quad -\Big[\big(L_\tau^*+H_\tau^*\big) C^*C \big[e^{A(\cdot-s)}w_0-e^{A(\cdot-\tau)}w_1
+\cK_s\eta(\cdot)-\cK_\tau \theta(\cdot)\big]\Big](t)\,.
\end{split}
\end{equation*}
Here we note two properties that are crucially utilized in the computations: the first is the
following expression of $H_su(t)$ -- different from \eqref{e:operator_Hs} -- obtained via the
Fubini's theorem:
\begin{equation*}
H_s u(t) = \int_s^t \int_p^t e^{A(t-q)}k(q-p)\,dq \,Bu(p)\,dp\,;
\end{equation*}
the second is the fact that the adjoint operators
\begin{equation*}
\begin{split} 
& [L_s^*v](t)= \int_t^T B^* e^{A^*(q-t)}v(q)\, dq\,, \qquad s<t<T\,,
\\
& [H_s^*v](t)= \int_t^T \int_t^q B^* e^{A^*(q-p)} k(p-q)^*\,dp v(q)\, dq\,, \qquad s<t<T
\end{split}
\end{equation*}
do not depend on $s$ (they are just defined for any $t$ which is not smaller than $s$).
We push forward the computations in order to obtain a simplified form of the right hand side
of the above identity.
An outline of the major points is provided: 
we decompose the term $(L_s+H_s\big)\hat{u}(\cdot,s,X_0)]$ -- that corresponds to an integral between $s$ and $\sigma$, say -- 
as the sum of two integrals, over $(s,\tau)$ and $(\tau,\sigma)$ respectively;
the difference $e^{A(\cdot-s)}w_0-e^{A(\cdot-\tau)}w_1$ is rewritten using basic semigroups properties, whereas 
in order to obtain a neater expression of the difference
$[\cK_s\eta](\cdot)-[\cK_\tau \theta](\cdot)$ we set 
\begin{equation*}
\cG(\cdot,q,p):=e^{A(\cdot-q)} k(q-p)\,B\,.
\end{equation*}
The latter yields
\begin{equation*}
\begin{split}
&[\cK_s\eta](\cdot)-[\cK_\tau \theta](\cdot)
= \int_0^s \int_s^\cdot \cG(\cdot,q,p)\eta(p)\,dq\,dp
- \int_0^\tau \int_\tau^\cdot \cG(\cdot,q,p)\theta(p)\,dq\,dp
\\
&\quad = \Big[\int_0^s\int_s^\tau 
+ \underline{\int_0^s\int_\tau^\cdot}\Big] \cG(\cdot,q,p)\eta(p)\,dq\,dp
- \Big[\underline{\int_0^s\int_\tau^\cdot} 
+ \int_s^\tau\int_\tau^\cdot\Big] \cG(\cdot,q,p)\theta(p)\,dq\,dp\,,
\end{split}
\end{equation*}
where the underlined integrals clearly cancel, since $\eta\equiv \theta$ on $[0,s]$.
Thus, the difference simplifies as follows:
\begin{equation*}
[\cK_s\eta](\cdot)-[\cK_\tau \theta](\cdot)
= \int_0^s\int_s^\tau \cG(\cdot,q,p)\eta(p)\,dq\,dp
- \int_s^\tau\int_\tau^\cdot \cG(\cdot,q,p)\theta(p)\,dq\,dp\,.
\end{equation*} 
Taking into account all of the above, we achieve
\begin{equation} \label{e:pre-final}\tag{*}
\begin{split} 
&\hat{u}(t,s,X_0)-\hat{u}(t,\tau,X_1) = -\big(L_\tau^*+H_\tau^*\big)C^*C \big(L_\tau+H_\tau\big)\big[\hat{u}(\cdot,s,X_0)-\hat{u}(\cdot,\tau,X_1)\big]
\\[1mm]
& \quad
-\big(L_\tau^*+H_\tau^*\big)C^*C \Big[
\int_s^\tau e^{A(\cdot-p)}B\theta(p)\,dp 
+ \int_s^\tau\int_p^\cdot e^{A(\cdot-q)} k(q-p)\,dq\,B\theta(p)\,dp
\\[1mm]
& \qquad\quad
+ e^{A(\cdot-\tau)}\big(e^{A(\tau-s)}w_0-w_1\big)
\\[1mm]
& \qquad\qquad
+ \int_0^s\int_s^\tau \cG(\cdot,q,p)\eta(p)\,dq\,dp
- \int_s^\tau\int_\tau^\cdot \cG(\cdot,q,p)\theta(p)\,dq\,dp\Big](t)
\end{split}
\end{equation} 

Moving the first summand in the right hand side of \eqref{e:pre-final} to the left hand side and recalling the explicit expression of $\cG(\cdot,q,p)$, we end up with 
\begin{equation*}
\begin{split} 
&\Big[I+\big(L_\tau^*+H_\tau^*\Big)C^*C \big(L_\tau+H_\tau\big)\Big]
\big(\hat{u}(t,s,X_0)-\hat{u}(t,\tau,X_1)\Big)(t)
\\
&\; = -\Big[\big(L_\tau^*+H_\tau^*\big)C^*C \Big[
\int_s^\tau e^{A(\cdot -p)}B\theta(p)\,dp + \int_s^\tau \int_p^\tau e^{A(\cdot -q)}k(q-p)\,dq B\theta(p)\,dp 
\\
&\;\quad + \cancel{\int_s^\tau\int_\tau^\cdot e^{A(\cdot -q)} k(q-p)\,dq \,B\theta(p)\,dp} + e^{A(\cdot-s)}w_0-w_1
\\
&\;\quad +\int_0^s\int_s^\tau e^{A(\cdot-q)}k(q-p)\,dq \,B\theta(p)\,dp - \cancel{\int_s^\tau \int_\tau^\cdot e^{A(\cdot-q)}k(q-p)\,dq \,B\theta(p)\,dp}\Big]\Big](t)
\\
&\; =-\Big[\big(L_\tau^*+H_\tau^*\big)C^*C e^{A(\cdot-\tau)}\big[ L_s\theta(\tau)+ H_s\theta(\tau)+e^{A(\tau-s)}w_0-w_1 +\cK_s\eta(\tau)\big]\Big](t) 
\\
&\; = -\Big[\big(L_\tau^*+H_\tau^*\big)C^*C e^{A(\cdot-\tau)}\big(\hat{w}(\tau,s,X_0)-w_1\big)\Big](t)
\\
&\;  = -\Big[\big(L_\tau^*+H_\tau^*\big)C^*C \big[e^{A(\cdot-\tau)}0\big]\Big](t)\equiv 0\,.
\end{split}
\end{equation*}
Since the operator $I+\big(L_\tau^*+H_\tau^*\big)C^*C \big(L_\tau+H_\tau\big)$ is invertible,
it follows that $\hat{u}(t,\tau,X_1)=\hat{u}(t,s,X_0)$, which concludes the first part of the 
proof.

\smallskip
\noindent
{\bf 2.}
We next show that the first component of the optimal state inherits from the optimal control 
a transition property, as well.
We have 
\begin{equation*}
\begin{split} 
& \hat{w}(t,\tau,X_0)= e^{A(t-\tau)}\hat{w}(\tau,s,X_0)
+L_\tau u\hat{(}\cdot,\tau,\hat{w}(\tau,s,X_0))(t)+ H_\tau \hat{u}(\cdot,\tau,w(\tau,s,X_0))(t)
\\
& \qquad
+ \cK_\tau\theta(t)
\\
&\quad = e^{A(t-\tau)}\hat{w}(\tau,s,X_0)+L_\tau \hat{u}(\cdot,s,w_0)(t) 
+ H_\tau \hat{u}(\cdot,s,w_0)(t)
\\
& \qquad
+ \int_\tau^t \int_0^\tau e^{A(t-q)}k(q-p)B\theta(p)\,dp\,dq
\\
&\quad = e^{A(t-\tau)}\Big[e^{A(\tau-s)}w_0+L_s \hat{u}(\cdot,s,w_0)(t) + H_s \hat{u}(\cdot,s,w_0)(t)
\\
& \qquad 
+ \int_s^\tau \int_0^s e^{A(\tau-q)}k(q-p)B\eta(p)\,dp\,dq\Big] 
\\
&\quad
+ L_\tau \hat{u}(\cdot,s,w_0)(t) + H_\tau \hat{u}(\cdot,s,w_0)(t) 
+ \int_\tau^t \int_0^\tau e^{A(t-q)}k(q-p)B\theta(p)\,dp\,dq
\\
&\quad = e^{A(t-s)}w_0 + \big[e^{A(t-\tau)} L_s \hat{u}(\cdot,s,w_0)(t)
+ L_\tau \hat{u}(\cdot,s,w_0)(t) \big] 
\\
&\qquad
+ \big[e^{A(t-\tau)} H_s \hat{u}(\cdot,s,w_0)(t)+ H_\tau \hat{u}(\cdot,s,w_0)(t)\big] 
\\
&\quad
+ \Big[e^{A(t-\tau)} \int_s^\tau \int_0^s e^{A(\tau-q)}k(q-p)B\eta(p)\,dp\,dq
+ \int_\tau^t \int_0^\tau e^{A(t-q)}k(q-p)B\theta(p)\,dp\,dq\Big]
\\
&\; =: \sum_{i=1}^4 S_i\,.
\end{split}
\end{equation*}
 
The first summand reads as $S_1= e^{A(t-s)}w_0$, while
\begin{equation*}
S_2=\int_s^\tau e^{A(t-p)}B\hat{u}(p,s,w_0)\,dp+ \int_\tau^t e^{A(t-p)}B\hat{u}(p,s,w_0)\,dp
=L_s \hat{u}(\cdot,s,w_0)(t)\,.
\end{equation*}
As for the third and fourth summands, we find
\begin{equation*}
\begin{split} 
S_3 &= \int_s^\tau \int_p^\tau e^{A(t-q)}k(q-p)B\hat{u}(p,s,w_0)\,dp
+ \int_\tau^t \int_p^t e^{A(t-q)}k(q-p)B\hat{u}(p,s,w_0)\,dp
\\
&= \int_s^\tau \int_p^t e^{A(t-q)}k(q-p)B\hat{u}(p,s,w_0)\,dp 
- \int_s^\tau \int_\tau^t e^{A(t-q)}k(q-p)B\hat{u}(p,s,w_0)\,dp
\\
&\quad
+ \int_\tau^t \int_p^t e^{A(t-q)}k(q-p)\, dq B\hat{u}(p,s,w_0)\,dp
\\
&=H_s u(\cdot,s,w_0)(t) - \int_s^\tau \int_\tau^t e^{A(t-q)}k(q-p)B\hat{u}(p,s,w_0)\,dp\,,
\end{split}
\end{equation*} 
and
\begin{equation*}
\begin{split} 
S_4 &= \int_s^\tau \int_0^s e^{A(t-q)}k(q-p)B\eta(p)\,dp\,dq
+ \int_\tau^t \int_0^\tau e^{A(t-q)}k(q-p)B\theta(p)\,dp\,dq
\\
&= \int_s^\tau \int_0^s e^{A(t-q)}k(q-p)B\eta(p)\,dp\,dq
+ \int_\tau^t \int_0^s e^{A(t-q)}k(q-p)B\eta(p)\,dp\,dq
\\
&\qquad 
+ \int_\tau^t \int_s^\tau e^{A(t-q)}k(q-p)B\theta(p)\,dp\,dq
\\
&= \int_s^\tau \int_0^s e^{A(t-q)}k(q-p)B\eta(p)\,dp\,dq
+ \int_\tau^t \int_s^\tau e^{A(t-q)}k(q-p)B\theta(p)\,dp\,dq
\\
&=\cK_s\eta(t) +\int_s^\tau \int_\tau^t e^{A(t-q)}k(q-p)\,dq B\theta(p)\,dp\,.
\end{split}
\end{equation*}  
Combining the above expression of $S_i$, $i\in \{1,2,3,4\}$, we finally attain
\begin{equation*}
\begin{split} 
&\hat{w}(t,\tau,w(\tau,s,w_0))=S_1+S_2+S_3+S_4
\\
&\quad = e^{A(t-s)}w_0+L_s\hat{u}(\cdot,s,w_0)(t) + H_s \hat{u}(\cdot,s,w_0)(t) +\cK_s\eta(t)
\\
&\qquad 
+\int_s^\tau \int_\tau^t e^{A(t-q)}k(q-p)\,dq Bu(p,s,w_1)\,dp= \hat{w}(t,s,w_0)
\end{split}
\end{equation*}  
as desired.





\end{document}